\theoremstyle{plain}
\newtheorem{theorem}{Theorem}[section]
\newtheorem{lemma}[theorem]{Lemma}
\newtheorem{prop}[theorem]{Proposition}
\newtheorem{cor}[theorem]{Corollary}
\newtheorem{deftheo}[theorem]{Definition/Theorem}
\theoremstyle{definition}
\newtheorem{definition}[theorem]{Definition}
\newtheorem{remark}[theorem]{Remark}
\newtheorem{example}[theorem]{Example}
\theoremstyle{remark}
\renewcommand{\bar}{\overline}
\newcommand{\bbN}{\mathbb{N}}
\newcommand{\bbR}{\mathbb{R}}
\newcommand{\bbZ}{\mathbb{Z}}
\newcommand{\raw}{\rightarrow}
\newcommand{\cug}{\subseteq}
\newcommand{\calA}{\mathcal{A}}
\newcommand{\calB}{\mathcal{B}}
\newcommand{\calD}{\mathcal{D}}
\newcommand{\calG}{\mathcal{G}}
\newcommand{\calM}{\mathcal{M}}
\newcommand{\frh}{\mathfrak{h}}
\newcommand{\rla}{\--}
\newcommand{\isom}{\xrightarrow{\sim}}
\newcommand{\surjra}{\twoheadrightarrow}
\newcommand*\quot[2]{{^{\textstyle #1}\big/_{\textstyle #2}}}
\newcommand{\Address}{
  \bigskip{\footnotesize

  \textsc{Albert-Ludwigs-Universit\"at, Freiburg im Breisgau, Germany}\par\nopagebreak
  \textit{E-mail address}: \texttt{leonardo.patimo@math.uni-freiburg.de}
}}
\DeclareMathOperator{\ima}{Im}
\DeclareMathOperator{\Sym}{Sym}
\DeclareMathOperator{\codim}{codim}
\DeclareMathOperator{\at}{at}
\DeclareMathOperator{\coat}{coat}
\title{A combinatorial formula for the coefficient of $q$ in Kazhdan-Lusztig polynomials}
\author{Leonardo Patimo}
\begin{document}

\maketitle
\begin{abstract}
We propose a combinatorial interpretation of the coefficient of $q$ in Kazhdan-Lusztig polynomials and we prove it for finite simply-laced Weyl groups.
\end{abstract}

For every pair of elements $x,y$ in a Coxeter group $W$, Kazhdan and Lusztig \cite{KL} introduced a polynomial $P_{x,y}(q)\in \bbZ[q]$ known as the Kazhdan-Lusztig polynomial. The definition is elementary and can be given using a recursive formula. Over the course of the last decades, Kazhdan-Lusztig polynomials have played a central role in many different areas of representation theory, from semisimple Lie algebras in characteristic zero to algebraic groups in large characteristic  or to quantum groups at roots of unity (see \cite{W2} for a  history of the subject).

A somewhat elementary but still open problem involving Kazhdan-Lusztig polynomials is the so-called \emph{combinatorial invariance conjecture}, which was proposed by Dyer and Lusztig during the '80s (cf. \cite{Br}). The conjecture states that the Kazhdan-Lusztig polynomial $P_{x,y}(q)$ depends only on the poset structure of the Bruhat interval $[x,y]$. Partial progress has been made towards this conjecture: most notably in \cite{BCM} Brenti, Caselli and Marietti   proved the conjecture in the case $x=e$. 

Even for the coefficient of $q$, which is presumably the simplest coefficient of Kazhdan-Lusztig polynomials to study (for example, the positivity of this coefficient was shown by Tagawa in \cite{Tag}, almost twenty years before the general proof of positivity in \cite{EW1}), a general combinatorial interpretation is missing. 
In this paper we propose a new combinatorial interpretation for the coefficient of $q$. In finite simply-laced type (i.e. if $W$ is a Weyl group of type $ADE$) we are able to show that our combinatorial formula holds. As a consequence we confirm the combinatorial invariance conjecture for the coefficient of $q$ for finite simply-laced groups.

We explain now how this paper is structured. After recalling some background on Coxeter groups and on Kazhdan-Lusztig polynomials in Sections \ref{secCG} and \ref{secKL},  following \cite{Fie2} we introduce the moment graph $\calG$ of a Coxeter group and the related sheaves in Section \ref{secMG}. Moment graphs provide a useful algebraic/combinatorial setup for the study of Kazhdan-Lusztig polynomials (cf. \cite{Lan}). In Section \ref{secqMG} we explain how we can compute the coefficient of $q$ of the polynomial $P_{x,y}(q)$ (which we denote by $q_{x,y}$) by looking at sections of degree $2$ of the structure sheaf $\calA$ on the moment graph. In formulas, we have
\[ q_{x,y}=c_{x,y}-\dim \Gamma_0(\calA,[x,y])^2\]
where $c_{x,y}$ is the number of coatoms of $[x,y]$ and $\Gamma_0(\calA,[x,y])^2$ is the space of sections of degree $2$ of the structure sheaf on $\calG|_{[x,y]}$ considered up to global translations. This formula was actually already obtained by Dyer in \cite{Dye2} with different methods.

Our next step is to give an upper bound on $\dim\Gamma_0(\calA,[x,y])^2$ as follows. For a subset $F$ of the edges in $\calG|_{[x,y]}$ we define an operation of ``diamond closure'' by taking $F^\diamond$ to be the smallest subset of edges such that $F\cug F^\diamond$ and such that whenever we have a ``diamond'' in $\calG|_{[x,y]}$ of the form
\begin{center}
\begin{tikzpicture}[scale=0.5]
\path[-] 
(0,2) edge node[left] {$A$} (-2,0)
(0,2) edge node[right] {$B$} (2,0)
(0,-2) edge node[left] {$C$} (-2,0)
(0,-2) edge node[right] {$D$} (2,0);
\end{tikzpicture}
\end{center}
 with $A,B\in F^\diamond$, then $C,D\in F^\diamond$ (the orientation of the edges does not matter here). We define $g_{x,y}$ to be the minimal possible cardinality of a set $F$ such that $F^\diamond$ contains all the edges in $\calG|_{[x,y]}$. Then we show:
\begin{equation}\label{introeq}
 \dim \Gamma_0(\calA,[x,y])^2\leq g_{x,y}
\end{equation} 
 
To prove the equality in \eqref{introeq} we need an additional ingredient called the \emph{generalised lifting property}, which is unfortunately available only for finite Weyl groups of type $ADE$. The study of the generalised lifting property is the content of Section \ref{secGLP}.
The coefficient of $q$ in Kazhdan-Lusztig polynomials can be also approached using the related family of $R$-polynomials \cite{KL}.
Recall that if $x<y$ are in $W$ and $s$ is a simple reflection satisfying $xs>x$ and $ys<y$, then we have $y\geq xs$, $x\leq ys$ and the $R$-polynomial $R_{x,y}(q)$ can be obtained using the recursive formula $R_{x,y}(q) = (q-1) R_{x,ys}(q) +q R_{xs,ys}(q)$.

For arbitrary elements $x<y$ in $W$ the existence of such a simple reflection $s$ is not guaranteed. The work of Tsukermann and Williams \cite{TW} in type $A$, extended by Caselli and Sentinelli in arbitrary finite simply-laced type (i.e. in type $D$ and $E$) gives a workaround. They show that we can always find a (not necessarily simple) reflection $t$ such that $y\geq xt\gtrdot x$, $y\gtrdot yt\geq x$ (here  $\gtrdot$ denotes the covering relation for the Bruhat order) and that
\begin{equation}\label{introeq2}
R_{x,y}(q) = (q-1) R_{x,yt}(q) +q R_{xt,yt}(q)
\end{equation}
Such a reflection $t$ is called a \emph{minimal reflection} for $(x,y)$ and it is obtained by taking  a minimal reflection among the reflections satisfying $xt>x$ and $yt<y$ (minimal with respect to the order induced by the dominance order on positive roots, cf. Definition \ref{minrefdef}).
The importance of \eqref{introeq2} lies in the fact that we can use it to relate the coefficient of $q$ for the interval $[x,y]$ to the coefficients of $q$ relative to smaller intervals $[x',y']$, i.e. such that $\ell(y)-\ell(x)>\ell(y')-\ell(x')$.
This finally enables us to prove by induction in Section \ref{secProof} the combinatorial formula
\[q_{x,y}=c_{x,y}-g_{x,y}.\]

We remark that as an intermediate (and crucial) step in our proof we generalise to minimal reflections the following well-known property of simple reflections:
let  $x<y$  and assume $t$ is a minimal reflection for $(x,y)$. Then there exists a maximal chain $x\lessdot z_1\lessdot z_2\lessdot \ldots\lessdot z_{\ell(y)-\ell(x)-2}<yt$ such that $z_it\gtrdot z_i$ for all $i$. 

\section{Preliminaries on Coxeter groups}\label{secCG}

We refer to \cite{Hum} for background material on Coxeter groups.

Let $(W,S)$ be a Coxeter system. For $s,t\in S$, let $m_{st}\in \bbN$ denote the order of $st$. We say that $W$ is \emph{simply-laced} if $m_{st}\leq 3$ for any $s,t\in S$.

 We denote by $\ell$ the length function and by $\geq$ the Bruhat order.
Let $T=\bigcup_{w\in W} wSw^{-1}$ denote the set of reflections. For $x,y\in W$ let 
\[D(y) = \{ t \in T \mid yt <y\}\]
\[A(x) = \{ t \in T \mid xt > x\}\]
\[AD(x,y) = A(x) \cap D(y).\]
We have $|D(y)|=\ell(y)$, hence $|AD(x,y)|\geq \ell(y)-\ell(x)$ and $AD(x,y)$ is not empty for any $x<y$.

 We denote by $\lessdot$ the covering relation of the Bruhat order, i.e. $x\lessdot y$ if $x<y$ and $\ell(x)+1=\ell(y)$. We denote by $[x,y]:=\{z\in W \mid x\leq z\leq y\}$ the Bruhat interval. The intervals $[x,y)$ and $(x,y]$ are similarly defined. 
We denote by
\[\at(x,y) := \{ z \in W | x\lessdot z \leq y\}\] 
\[\coat(x,y) := \{ z \in W | x\leq z \lessdot y\}\] 
the set of \emph{atoms} and \emph{coatoms} of $[x,y]$. We also define 
\[\at^T(x,y) := \left\{ x^{-1}z \;\middle|\; z \in \at(x,y)\right\} \cug T\] 
\[\coat^T(x,y) := \left\{ z^{-1}y  \;\middle|\; z \in \coat(x,y) \right\}\cug T. \]

We fix $\frh$ to be a finite dimensional vector space over $\bbR$ with the property that there exist linearly independent subsets 
\[\{\alpha_s\}_{s\in S}\cug \frh^*\quad\text{ and }\quad \{\alpha_s^\vee\}_{s\in S}\cug \frh\]
such that $\alpha_s(\alpha_t^\vee)=-2 \cos (\pi/m_{st})$. We further assume that $\frh$ is of minimal possible dimension among representations satisfying the above properties.

Then the assignment $s(v):=v-\alpha_s(v)\alpha_s^\vee$ for any $s\in S$ defines a representation of $W$ on $\frh$. Notice that if $W$ is finite, then $\dim \frh = |S|$ and $\frh$ is the geometric representation of $W$ (cf. \cite[\S 5.3]{Hum}).
As shown in \cite[Proposition 2.1]{S4} the representation $\frh$ is reflection faithful, i.e. the  subset of fixed points of $t\in W$ form a hyperplane in $\frh$ if and only if $t\in T$.

We call $\{\alpha_s\}_{s\in S}$ and $\{\alpha_s^\vee\}_{s\in S}$ the set of simple roots and simple coroots respectively. 
Let $\Phi= \{ w(\alpha_s) \mid w \in W, s\in S\}\cug \frh^*$ denote the set of roots. 
Let $\Phi^+\cug \Phi$ be the subset of positive roots, i.e. the subset of roots which are a positive linear combination of simple roots. 

For a reflection $t\in T$ there exists $w \in W$  and $s \in S$  with $ws > w$ such that $t=wsw^{-1}$. Then we set $\alpha_t:=w(\alpha_s) \in \Phi^+$. Notice that $\alpha_t$ vanishes on the hyperplane fixed by $t$. The root $\alpha_t$ is well-defined and, if $r\neq t$, the roots $\alpha_r$ and $\alpha_t$ are linearly independent.

We introduce a partial order $\succ$ on $\Phi^+$ by setting $\alpha\succ \beta$ if $\alpha-\beta$ is a positive linear combination of simple roots.
This also induces a partial order $\succ$ on $T$ where, for $r,t\in T$, we say $r\succ t$ if $\alpha_r \succ \alpha_t$.

The following simple lemma describes Bruhat intervals of length $2$ and it is applied several times in this paper.
\begin{lemma}\label{squares}
Let $x,y\in W$ and assume $x< y$ and $\ell(y)-\ell(x)=2$. Then there exists $r,t\in A(x)$ with $r\neq t$ such that $[x,y]=\{x,xr,xt,y\}$. 

Moreover, if $W$ is simply-laced, then $y=xtr$ or $y=xrt$.
\end{lemma}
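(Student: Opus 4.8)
The plan is to treat the two assertions separately, the second by induction on $\ell(y)$.

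For the first assertion, every $z\in[x,y]$ has length in $\{\ell(x),\ell(x)+1,\ell(x)+2\}$, with $x$ and $y$ the unique elements of extremal length, so the claim reduces to showing that $[x,y]$ has exactly two elements of length $\ell(x)+1$, i.e.\ exactly two atoms. Any atom $z$ is automatically of the form $xr$ with $r:=x^{-1}z\in T$, and $r\in A(x)$ since $xr>x$; the condition $r\ne t$ then follows from $xr\ne xt$. That there are exactly two atoms is the well-known thinness of the Bruhat order, which I would simply invoke.

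For the ``moreover'' part, assume $W$ simply-laced and induct on $\ell(y)$. If $\ell(y)=2$ then $x=e$; a reduced expression $y=s_1s_2$ gives $[x,y]=\{e,s_1,s_2,s_1s_2\}$, and $y$ is the product of its two atoms. For $\ell(y)>2$, suppose first that some $s\in S$ satisfies $ys<y$ and $xs>x$. Then the lifting property gives $x\lessdot xs\le y$ and $x\le ys\lessdot y$, so $xs$ and $ys$ are the two atoms of $[x,y]$; relabelling so that $xr=xs$ (hence $r=s$) and $xt=ys$ yields $y=(ys)s=xtr$. The mirror argument, using a simple \emph{left} descent $s'$ of $y$ with $s'x>x$, gives $y=xrt$. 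Both cases use nothing beyond the lifting property.

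It remains to treat the case in which every simple right descent of $y$ is a right descent of $x$, and likewise on the left — a situation which can already occur in type $A$. Here I would fix a simple right descent $s$ of $y$; then $xs<x$, and with $x_1:=xs$, $y_1:=ys$ one has $x_1<y_1$, $\ell(y_1)=\ell(y)-1$ and $\ell(y_1)-\ell(x_1)=2$. By the inductive hypothesis $[x_1,y_1]=\{x_1,x_1r_1,x_1t_1,y_1\}$ with, after relabelling, $y_1=x_1t_1r_1$; I would then transport this to $[x,y]$ via right multiplication by $s$. Granting that $z\mapsto zs$ is a length-increasing bijection $[x_1,y_1]\to[x,y]$, the four elements of $[x,y]$ are $x_1s=x$, $(x_1r_1)s$, $(x_1t_1)s$ and $y_1s=y$, the middle two being its atoms; then, setting $r:=sr_1s$ and $t:=st_1s$,
\[
xtr\;=\;x\,(st_1s)(sr_1s)\;=\;(xs)\,t_1r_1\,s\;=\;x_1t_1r_1\,s\;=\;y_1s\;=\;y .
\]

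The one step that is not formal — and which I expect to be the main obstacle — is the claim that $z\mapsto zs$ really is length-increasing on all of $[x_1,y_1]$, i.e.\ that $s$ is a right ascent of every $z\in[x_1,y_1]$. This is clear for $z\in\{x_1,y_1\}$; for an atom $z$ of $[x_1,y_1]$ the lifting property shows that $zs<z$ would force $zs=x_1$, hence $z=x$ and $x\le y_1$. So everything comes down to ruling out $x\le ys$ (for a suitable simple right descent $s$ of $y$) in this remaining case, and this is precisely where $m_{st}\le 3$ is needed: in type $B_2$ it fails — e.g.\ for $x=s$, $y=sts$ one has $x\le ys$ — and there the ``moreover'' assertion itself fails. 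The remaining bookkeeping, matching the atoms of $[x_1,y_1]$ with those of $[x,y]$, is routine.
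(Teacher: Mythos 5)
The first part of your argument matches the paper, which likewise invokes the thinness of the Bruhat order (citing \cite[Lemma 2.7.3]{BjBr}). For the ``moreover'' part, however, your inductive lifting-property approach is genuinely different from the paper's: there the interval $[x,y]$ is shown, via Dyer's results \cite[Proposition 2.1 and Lemma 3.1]{Dye}, to be isomorphic to a Bruhat interval in a rank-$2$ reflection subgroup, which in simply-laced type must be of type $A_1\times A_1$ or $A_2$; the claim then reduces to a trivial finite check in those two groups, with no induction and no case split.

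Your proposal has a real gap, precisely at the step you flag. The two cases where $y$ has a simple right (resp.\ left) descent that is an ascent of $x$ are fine and use only the lifting property. But in the remaining case, your induction hinges on producing a simple right descent $s$ of $y$ with $x\not\le ys$ (or the mirror statement on the left), so that $z\mapsto zs$ is a length-increasing bijection $[xs,ys]\to[x,y]$. You correctly trace the failure mode --- if $x\le ys$ the transported map breaks down, and in $B_2$ with $x=s$, $y=sts$ one indeed has $x\le ys$ and the conclusion fails --- but you offer no argument that such an $s$ can always be found when $W$ is simply-laced. This is not peripheral bookkeeping: it is the one place the simply-laced hypothesis must do work, and it is not evident that elementary lifting arguments close it. (One would need something like: in ADE type, $x<y$ with $\ell(y)-\ell(x)=2$ and with no simple right or left descent of $y$ that ascends $x$ forces a right descent $s$ of $y$ with $ys\not\ge x$.) The paper's route through Dyer's reflection subgroup avoids having to prove any such statement, since the dihedral reduction handles the whole question at once.
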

\begin{proof}
The first part is \cite[Lemma 2.7.3]{BjBr}.

Assume now that $W$ is simply-laced. By \cite[Lemma  3.1]{Dye}, the reflection subgroup $W'=\langle r,t,rx^{-1}y,tx^{-1}y\rangle$ is isomorphic to a simply-laced Coxeter subgroup of rank $2$ (i.e. $W'$ is of type $A_1\times A_1$ or of type $A_2$) and, by \cite[Proposition  2.1]{Dye} the interval $[x,y]$ is isomorphic to a Bruhat interval in $W'$. It is then enough to prove the claim for Coxeter groups of type $A_1\times A_1$ or of type $A_2$, and in these cases it is a trivial check.
\end{proof}

The following Lemma is needed in Section \ref{secGLP} (and it is crucial in Caselli and Sentinelli's proof of the generalised lifting property).

\begin{lemma}[{\cite[Proposition 2.3]{CS}}]\label{23}
	Let $t_1,t_2\in T$.
	\begin{enumerate}[i)]
		\item If there exists $x \in W$ such that $t_1\in D(x)$ and $t_2t_1t_2\in A(xt_2)$, then $t_1\in D(t_2)$.
		\item If there exists $x \in W$ such that $t_1\in D(x)$ and $t_2t_1t_2\in D(xt_2)$, then $t_1\in A(t_2)$.
	\end{enumerate}
\end{lemma}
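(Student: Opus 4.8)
The plan is to translate everything into root-theoretic language, where the statement collapses to a short sign computation. First I would invoke the \emph{root criterion}: for any $w\in W$ and any reflection $u\in T$ one has $wu<w$ if and only if $w(\alpha_u)\in\Phi^-$, and $wu>w$ if and only if $w(\alpha_u)\in\Phi^+$. This is available in the present generality because the restriction of the $W$-action to $\spa\{\alpha_s\mid s\in S\}\cug\frh^*$ is isomorphic to the geometric representation of $(W,S)$, so the classical inversion theory of Coxeter root systems applies verbatim whether or not $W$ is finite. In terms of the sets appearing in the lemma this reads $u\in D(w)\iff w(\alpha_u)\in\Phi^-$ and $u\in A(w)\iff w(\alpha_u)\in\Phi^+$; specialising $w=t$ gives $r\in D(t)\iff t(\alpha_r)\in\Phi^-$ and $r\in A(t)\iff t(\alpha_r)\in\Phi^+$, so everything will come down to the sign of the root $t(\alpha_r)$.

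Next I would record how conjugation acts on roots. The reflection $trt$ has fixed hyperplane the image under $t$ of the fixed hyperplane of $r$, and the vector $t(\alpha_r)\in\Phi$ vanishes on it; hence $\alpha_{trt}=\epsilon\,t(\alpha_r)$, where $\epsilon\in\{+1,-1\}$ is chosen so that $\epsilon\,t(\alpha_r)$ is the positive representative. (This makes sense because $W$ preserves $\Phi=\Phi^+\sqcup(-\Phi^+)$.) Consequently, for every $x\in W$,
\[
(xt)(\alpha_{trt})=\epsilon\,(xt)\bigl(t(\alpha_r)\bigr)=\epsilon\,x(\alpha_r),
\]
using $t^2=e$.

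The lemma is now a two-line case check. Let $x\in W$ be as in the hypothesis, so $r\in D(x)$ gives $x(\alpha_r)\in\Phi^-$. For the first bullet, $trt\in A(xt)$ means $\epsilon\,x(\alpha_r)=(xt)(\alpha_{trt})\in\Phi^+$; combined with $x(\alpha_r)\in\Phi^-$ this forces $\epsilon=-1$, i.e. $t(\alpha_r)\in\Phi^-$, which is exactly $r\in D(t)$. For the second bullet, $trt\in D(xt)$ means $\epsilon\,x(\alpha_r)\in\Phi^-$; combined with $x(\alpha_r)\in\Phi^-$ this forces $\epsilon=+1$, i.e. $t(\alpha_r)\in\Phi^+$, which is exactly $r\in A(t)$. (The degenerate case $r=t$ is harmless: then $t(\alpha_r)=-\alpha_t\in\Phi^-$, so the first bullet's hypotheses hold automatically with true conclusion $t\in D(t)$, while the second bullet's hypotheses are contradictory.)

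I do not expect a real obstacle: the argument is forced once the dictionary is set up, and the only genuine pitfall is keeping the two sign conventions — the one in the root criterion and the one normalising $\alpha_{trt}=\epsilon\,t(\alpha_r)$ — mutually consistent, since an error in either would silently swap the two bullets.
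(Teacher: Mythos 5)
Your proof is correct. Note that the paper offers no argument for this lemma at all---it is imported verbatim from \cite[Proposition 2.3]{CS}---so your root-theoretic sign computation is not competing with an in-paper proof but validly replacing the citation: the criterion $wu<w\iff w(\alpha_u)\in\Phi^-$ for $u\in T$ and the identity $t(\alpha_r)=\pm\alpha_{trt}$ do hold in the paper's setting (the span of $\{\alpha_s\}_{s\in S}$ in $\frh^*$ carries the geometric representation, for arbitrary $W$), your two sign conventions are kept consistent, and the degenerate case $r=t$ is correctly dispatched.
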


Assume $W$ to be simply-laced.
Let $r,t\in T$ with $r\neq t$. If $r \in D(t)$, then $rt\neq tr$ \cite[Corollary 2.4]{CS}, hence $r,t$ generate a Coxeter group of type $A_2$.
 Moreover,  from \cite[Corollary 3.4]{CS} we have
\begin{equation} \label{Dimplies}
 r\in D(t)\setminus \{t\} \implies  \alpha_r+\alpha_{rtr}=\alpha_t\text{,\; and so }\;r\prec t.
\end{equation}
\section{Kazhdan-Lusztig polynomial and \texorpdfstring{$R$}{R}-polynomials}\label{secKL}


In \cite{KL} Kazhdan-Lusztig polynomials are originally introduced as coefficients of a certain canonical basis in the Iwahori-Hecke algebra. Since the Iwahori-Hecke algebra does not play any direct role in this work, we follow \cite[\S 5.1]{BjBr} and we give instead an equivalent definition which emphasises the relations of Kazhdan-Lusztig polynomials with the $R$-polynomials.

\begin{deftheo}\label{Rpol}
Let $W$ be a Coxeter group. There exists a unique family of polynomials with integral coefficients $\{R_{x,y}(q)\}_{x,y\in W}$, called \emph{$R$-polynomials}, satisfying the following conditions:
\begin{enumerate}[i)]
\item $R_{x,x}(q)=1$ 
\item $R_{x,y}(q)=0$ if $x\not\leq y$, 
\item for any $s\in S$ such that $ys<y$ we have 
\[R_{x,y}(q)=\begin{cases}
R_{xs,ys}(q)& \text{ if }xs<x,\\
qR_{xs,ys}(q)+(q-1)R_{x,ys}(q) & \text{ if }xs>x.\\
\end{cases}\]
\end{enumerate}
\end{deftheo}

\begin{deftheo}
Let $W$ be a Coxeter group. There exists a unique family of polynomials with integral coefficients $\{P_{x,y}(q)\}_{x,y\in W}$, called \emph{Kazhdan-Lusztig polynomials}, satisfying the following conditions:
\begin{enumerate}[i)]
\item $P_{x,x}(q)=1$,
\item $P_{x,y}(q)=0$ if $x\not\leq y$,
\item $\deg(P_{x,y}(q))\leq \frac12  (\ell(y)-\ell(x)-1)$ if $x<y$,
\item for any $x\leq y$ we have
\begin{equation}\label{KLandR}
q^{\ell(y)-\ell(x)}P_{x,y}(q^{-1}) = \sum_{z\in[x,y]} R_{x,z}(q)P_{z,y}(q).
\end{equation}
\end{enumerate}
\end{deftheo}

It is  an immediate consequence of the definitions that for any $x\leq y$ the polynomial $R_{x,y}(q)$ is monic of degree $\ell(y)-\ell(x)$ and the polynomial $P_{x,y}(q)$ has constant term equal to $1$.

We introduce some further notation. For $x\leq y$ let $q_{x,y},d_{x,y}\in \bbZ$ be such that
\[ P_{x,y}(q) = 1 + q_{x,y}q + \text{``higher terms in $q$''},\]
\[ R_{x,y}(q) = q^{\ell(y)-\ell(x)}- d_{x,y}q^{\ell(y)-\ell(x)-1} + \text{``lower terms in $q$''}.\]

\begin{lemma}\label{drec}
Let $W$ be a Coxeter group. Then: 
\begin{enumerate}[i)]
\item $d_{x,x}=0$ for any $x\in W$,
\item for any $x\leq y$ and for any $s\in S$ such that $ys<y$ we have
\[
d_{x,y} = \begin{cases}
d_{xs,ys} & \text{ if }xs<x,\\
d_{x,ys} +1 & \text{ if }xs>x \text{ and } xs\not\leq ys, \\
d_{x,ys}  & \text{ if }xs>x \text{ and } xs\leq ys. \\
\end{cases}
\]
\end{enumerate}
\end{lemma}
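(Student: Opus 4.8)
The plan is to read the recursion for $d_{x,y}$ directly off the recursion for $R_{x,y}(q)$ in Definition/Theorem~\ref{Rpol}(iii), by comparing the coefficient of $q^{\ell(y)-\ell(x)-1}$ on the two sides; no induction is needed. Part (i) is immediate: if $x\not\leq y$ then $R_{x,y}(q)=0$, while if $x=y$ then $R_{x,x}(q)=1$, so in both cases the coefficient of $q^{\ell(y)-\ell(x)-1}$ vanishes and $d_{x,y}=0$. For part (ii) we may therefore take $x\le y$; if $x=y$ then $ys<y$ forces $xs=ys<x$, so only the first case occurs and it reads $0=0$ by part (i). So assume $x<y$ and set $n:=\ell(y)-\ell(x)\geq 1$, recalling that for any $u\leq v$ the polynomial $R_{u,v}(q)$ is monic of degree $\ell(v)-\ell(u)$ (noted right after Definition/Theorem~\ref{Rpol}) with coefficient of $q^{\ell(v)-\ell(u)-1}$ equal to $-d_{u,v}$.

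Fix $s\in S$ with $ys<y$. If $xs<x$, then $R_{x,y}(q)=R_{xs,ys}(q)$ and $\ell(ys)-\ell(xs)=n$, so the two sides are monic of the same degree $n$ and comparing the coefficients of $q^{n-1}$ gives $d_{x,y}=d_{xs,ys}$ at once.

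Now suppose $xs>x$, so $R_{x,y}(q)=qR_{xs,ys}(q)+(q-1)R_{x,ys}(q)$. The key input is the lifting property \cite[Proposition~2.2.7]{BjBr}: from $x<y$, $ys<y$ and $xs>x$ it gives $x\leq ys$, so $R_{x,ys}(q)$ is monic of degree $\ell(ys)-\ell(x)=n-1$ with coefficient of $q^{n-2}$ equal to $-d_{x,ys}$, whence
\[
(q-1)R_{x,ys}(q)=q^{n}-(d_{x,ys}+1)\,q^{n-1}+(\text{lower order terms}).
\]
If moreover $xs\not\leq ys$, then $R_{xs,ys}(q)=0$, and reading off the coefficient of $q^{n-1}$ gives $d_{x,y}=d_{x,ys}+1$. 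If instead $xs\leq ys$, then $R_{xs,ys}(q)$ is monic of degree $\ell(ys)-\ell(xs)=n-2$, so $qR_{xs,ys}(q)=q^{n-1}+(\text{lower order terms})$ contributes an extra $+1$ to the coefficient of $q^{n-1}$, which becomes $-(d_{x,ys}+1)+1=-d_{x,ys}$; hence $d_{x,y}=d_{x,ys}$. This exhausts the three cases.

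There is no real obstacle here: the only external facts used are that $R_{u,v}(q)$ is monic of degree $\ell(v)-\ell(u)$ for $u\leq v$ and the lifting property, and the latter is precisely what guarantees $x\leq ys$ so that the degrees line up as claimed. As a consistency check one may note that in each case the coefficients of $q^{n}$ also match (both equal $1$, since $R_{x,y}(q)$ is monic of degree $n$); and the degenerate case $n=1$, where $R_{x,ys}(q)=1$ and $d_{x,ys}=0$, is covered by exactly the same computation.
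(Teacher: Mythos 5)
Your proof is correct and follows the same route as the paper, which simply states that the lemma is an immediate consequence of the $R$-polynomial recursion in Definition/Theorem~\ref{Rpol}; you have just written out the coefficient comparison (together with the lifting property guaranteeing $x\leq ys$, which the paper also invokes in its introduction) in full detail.
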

\begin{proof}
This is an immediate consequence of Definition/Theorem \ref{Rpol}.
\end{proof}
 
The only terms contributing to the coefficient of $q^{\ell(y)-\ell(x)-1}$ in the RHS of \eqref{KLandR} are $z=y$ and $z\in \coat(x,y)$.
If $c_{x,y}:= |\coat(x,y)|$ we obtain
\begin{equation}\label{q=c-d}
q_{x,y}=-d_{x,y}+\sum_{z\in \coat(x,y)} 1=c_{x,y}-d_{x,y}. 
\end{equation}
The main purpose of the next sections is to provide a combinatorial interpretation of the term $d_{x,y}$.
\section{The moment graph of Coxeter groups}\label{secMG}

We recall the definition of moment graphs and of the related sheaves from \cite{Fie2}.
Recall that $\frh$ is the reflection faithful representation of $W$ introduced in Section \ref{secCG}.

\begin{definition}
The \emph{moment graph} $\calG:=\calG(W,\frh)$ of a Coxeter group $W$ is a labelled directed graph defined as follows.
The set of vertices is the set of elements of $W$. Two vertices $v,w$ are connected by an arrow $v\raw w$ if $w>v$ and there exists a reflection $t\in T$ such that $w=vt$. We label this edge by $\alpha_t\in \frh^*$, where $\alpha_t$ is the positive root corresponding to $t$.\footnote{Note that this differs from \cite{Fie2} since we use right multiplication by reflections (instead of left) to label the edges of $\calG$.}
\end{definition}

The moment graph of Coxeter group is also often denoted Bruhat graph (cf. \cite[Definition 2.1.1]{BjBr}).
For a subset $A\cug W$, we denote by $\calG|_{A}$ the full subgraph of $\calG$ whose vertices are the elements of $A$.

We denote by $R=\Sym_\bbR(\frh^*)$ the symmetric algebra of $\frh^*$. We regard $R$ as a graded algebra with $\frh^*$ sitting in degree $2$.

\begin{definition}
A sheaf $\calM$ on the moment graph of $W$ is given by
\begin{itemize}
\item a graded $R$-module $\calM_x$ for any $x\in W$,
\item for any arrow $x\raw xt$ in $\calG$ a graded $R$-module $\calM_{x\raw xt}$ such that $\alpha_t\cdot \calM_{x\raw xt}=0$,
\item for any arrow $x\raw xt$ in $ \calG$ two morphisms  of graded $R$-modules $\pi_{x,xt}:\calM_x\raw \calM_{x\raw xt}$ and $\pi_{xt,x}:\calM_{xt}\raw \calM_{x\raw xt}$.
\end{itemize}
\end{definition}



If $A\cug W$, the space of sections of a sheaf $\calM$ over $A$ is 
\[\Gamma(\calM,A):=\left\{(m_x)\in \prod_{x\in A}\calM_x\mid\begin{array}{c} \pi_{x,xt}(m_x)=\pi_{xt,x}(m_{xt})\\ \text{ for all } x\in A\text{ and } t \in T\text{ such that }xt\in A\end{array}\right\}.\]
The \emph{space of global sections} of a sheaf $\calM$ is $\Gamma(\calM):=\Gamma(\calM,W)$.
We simply write $\Gamma(\calM,>x)$ for $\Gamma(\calM,\{w \in W \mid w>x\})$ and similarly for $\geq x,\leq x$ and $<x$.

\begin{example}\label{strsheaf}
A trivial example of a sheaf on the moment graph is the structure sheaf $\calA$. It is defined by setting  $\calA_x:= R$ for any $x\in W$ and $\calA_{x\raw xt}:=R/(\alpha_t)$ for any arrow $x\raw xt$ in $\calG$. The maps $\pi_{x,tx}$ and $\pi_{xt,x}$ are the natural projections $R\twoheadrightarrow R/(\alpha_t)$. Notice that the orientation of the arrows in $\calG$ does not matter in the definition of $\calA$.\end{example}

Let $y\in W$. We quickly recall the construction of the canonical sheaf $\calB(y)$ from \cite{BMP}. We start by setting $\calB(y)_y=R$ and $\calB(y)_z= 0$ for any $z\not \leq y$.

Fix $x \leq y$ and assume that $\calB(y)_z$ is already defined for all $z>x$. Then for any $t\in A(x)$ we define $\calB(y)_{x\raw xt}:= \calB(y)_{xt}/(\alpha_t)$ and $\pi_{xt,x}:\calB(y)_{xt}\raw \calB(y)_{x\raw xt}$ to be the natural projection.

 Let 
\[\calB(y)_{\delta x}:= \ima\left(p:\Gamma(\calB(y),>x)\raw \bigoplus_{t\in A(x)}  \calB(y)_{x\raw xt}\right)\] 
where $p$ is the restriction map.
Define $\calB(y)_x$ to be the projective cover  of $\calB(y)_{\delta x}$ as a  graded $R$-module. Notice that $\calB(y)_x$ is a free $R$-module. Finally, for any $t\in A(x)$, let the morphism $\pi_{x,xt}$ be defined as the following composition \[\calB(y)_x\raw \calB(y)_{\delta x} \hookrightarrow \bigoplus_{t\in A(x)}  \calB(y)_{x\raw xt}\surjra \calB(y)_{x\raw xt}.\]  

\begin{remark}
Assume $W$ is the Weyl group of a complex algebraic group $G$ with maximal torus $T$. 
Let $X$ be the flag variety of $G$ and for $w\in W$ we denote by $X_w$ the corresponding Schubert variety. Then we can use the sheaves $\calA$ and $\calB(w)$ to compute the $T$-equivariant cohomology and $T$-equivariant intersection cohomology of $X_w$. More precisely, we have isomorphisms of $R$-modules (cf. \cite[Theorem 1.2.2]{GKM} and \cite[Theorem 1.5]{BMP})
\[\Gamma(\calA,\leq w)\cong H_T(X_w,\bbR)\quad\text{ and }\quad\Gamma(\calB(w))\cong IH_T(X_w,\bbR).\]
\end{remark}

If $V$ is a graded vector space, we denote by $V^i$ its graded component of degree $i$. We regard $\bbR$ as a $R$-module via the isomorphism $\bbR\cong R/R^+$, where $R^+$ denotes the ideal of elements of positive degree.
\begin{theorem}\label{KLconj}
The Poincar\'e polynomial of $\bar{\calB(y)_x}=\calB(y)_x\otimes_R \bbR$ is the Kazhdan-Lusztig polynomial $P_{x,y}(q^{2})$, i.e. 
 we have 
\[ P_{x,y}(q) = \sum_{i\geq 0}\dim \bar{\calB(y)_x}^{2i} q^i.\] 
\end{theorem}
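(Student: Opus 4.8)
The plan is to show directly that the polynomials
\[ h^y_x(q):=\sum_{i\geq 0}\dim\bar{\calB(y)_x}^{2i}\,q^i\ \in\ \bbZ[q] \]
(well defined, since each $\calB(y)_x$ is a free graded $R$-module of finite rank, concentrated in even degrees by construction) satisfy the four conditions (i)--(iv) characterising the Kazhdan--Lusztig polynomials; the asserted equality $h^y_x=P_{x,y}$ then follows from the uniqueness part of their definition. Conditions (i) and (ii) are immediate: $\calB(y)_y=R$ gives $h^y_y=1$, and $\calB(y)_x=0$ for $x\not\leq y$ is built into the construction. The content is to establish the degree bound (iii) for $x<y$ and the $R$-polynomial inversion formula \eqref{KLandR} for the whole family $\{h^y_x\}_{x\leq y}$.

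For \eqref{KLandR} I would analyse the module of global sections $\Gamma(\calB(y))$, which by the construction is graded free over $R$. Two structural features of $\Gamma(\calB(y))$ do the work, both available from \cite{BMP, Fie2}: it is \emph{flabby}, meaning the restriction maps $\Gamma(\calB(y),\geq x)\to\Gamma(\calB(y),>x)$ are surjective, and it carries a \emph{Verma flag}, a filtration indexed by $[e,y]$ whose subquotients are shifted copies of $R$, the number and shifts of the copies sitting ``at $z$'' recording the graded rank of the stalk $\calB(y)_z$. Flabbiness produces, for each $x\leq y$, a short exact sequence
\[ 0\longrightarrow \calC_x\longrightarrow \Gamma(\calB(y),\geq x)\longrightarrow\Gamma(\calB(y),>x)\longrightarrow 0, \]
whose kernel $\calC_x$ I will call the costalk at $x$. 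The Verma flag restricts to $\calC_x$, and tracking the degree shifts through the Bruhat-order combinatorics --- which is precisely the content of the recursion for the $R$-polynomials in Definition/Theorem \ref{Rpol} --- expresses the graded rank of $\calC_x$ (up to a uniform shift) as $\sum_{z\in[x,y]}R_{x,z}(q)\,h^y_z(q)$. Combined with the self-duality of $\calB(y)$ discussed next, which identifies this same quantity with $q^{\ell(y)-\ell(x)}h^y_x(q^{-1})$, one obtains \eqref{KLandR}.

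The self-duality $\calB(y)\cong\bbD\calB(y)$ of the canonical sheaf is at once the source of the degree bound (iii) and, I expect, the main obstacle. The Braden--MacPherson recursion by itself only controls the size of each stalk $\calB(y)_x$ from one side; promoting this to self-duality --- equivalently, to the statement that $\Gamma(\calB(y))$ is the minimal indecomposable self-dual object supported on $[e,y]$ with top at $y$, with no spurious summands in intermediate degrees --- is the deep point. It forces $h^y_x$ to be ``perverse'', and in particular gives the strict bound $\deg h^y_x\leq\frac12(\ell(y)-\ell(x)-1)$ for $x<y$. When $W$ is a Weyl group (in particular in the finite simply-laced case needed for the rest of the paper) this self-duality is Poincar\'e duality for the intersection cohomology of the Schubert variety $X_y$, via the identification $\Gamma(\calB(y))\cong IH_T(X_y,\bbR)$ recalled above, and the theorem then reduces to the geometric interpretation of $P_{x,y}$ due to Kazhdan and Lusztig \cite{KL}. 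For a general Coxeter group I would instead invoke Fiebig's equivalence between the category of Braden--MacPherson sheaves on $\calG$ and the category of Soergel bimodules, under which $\Gamma(\calB(y))$ corresponds to the indecomposable Soergel bimodule, together with the Elias--Williamson theorem (the proof of Soergel's conjecture) computing its character; these together yield both \eqref{KLandR} and the degree bound, hence $h^y_x=P_{x,y}$. As this circle of ideas is by now standard, I would expect the actual write-up to amount to citations of \cite{BMP, Fie2} --- and of \cite{KL} for the geometric input in the Weyl case --- with the only genuinely hard ingredient being the self-duality isolated above.
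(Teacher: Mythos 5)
Your proposal follows essentially the same route as the paper, which offers no independent proof of Theorem \ref{KLconj} but attributes it to Braden--MacPherson \cite{BMP} (geometric, via Hodge modules) in the Weyl group case and to the combination of Fiebig \cite{Fie2} with Elias--Williamson \cite{EW1} in general; your sketch of verifying the defining axioms via flabbiness, Verma flags and the $R$-polynomial recursion, with the self-duality/degree bound isolated as the deep input, is precisely the standard reduction underlying those citations. So the proposal is acceptable, with the understanding that (as you say yourself) the hard content is outsourced to the same references the paper invokes.
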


The proof of Theorem \ref{KLconj} is due to Braden and MacPherson \cite{BMP} for finite and affine Weyl groups. Their proof employs geometric techniques and it ultimately relies on Saito's theory of Hodge modules. For arbitrary Coxeter groups the Theorem follows by combining the work of Fiebig \cite{Fie2} (which links sheaves on the moment graphs to Soergel bimodules) and the work of Elias and Williamson: in \cite{EW1} they prove Theorem \ref{KLconj} in the setting of Soergel bimodules by developing an algebraic version of Hodge theory.

\begin{remark}
A remarkable consequence of Theorem \ref{KLconj} is that the polynomial $P_{x,y}(q)$ only depends on the subgraph $\calG|_{[x,y]}$. 
On the other hand the poset structure of $[x,y]$ determines $\calG|_{[x,y]}$ as an unlabelled directed graph \cite{Dye}. 
Thus, the missing step in  the combinatorial invariance conjecture is to show that the labels in $\calG$ are actually a  superfluous piece of data for the computations of the  Kazhdan-Lusztig polynomials.
\end{remark}

\section{Moment graphs and the coefficient of \texorpdfstring{$q$}{q}}\label{secqMG}

For a graded $R$-module $M$ we denote by $M^i$ its graded component of degree $i$ and we denote by $M[i]$ its grading shift, i.e. $M[i]^k=M^{i+k}$.

If $x\leq y$ the constant term of $P_{x,y}(q)$ is $1$, hence the $R$-module $\calB(y)_x$ contains a unique summand isomorphic to $R$: this can be obtained as the $R$-submodule generated by any non-zero element in degree $0$.
Moreover, from Theorem \ref{KLconj} it follows that $\calB(y)_x$ contains the summand $R[-2]$ with multiplicity $q_{x,y}$.

Fix $y\in W$. For every $x\leq y$ we choose $1_x\in R^0\cug \calB(y)_x^0$ compatibly so that if $xt\leq y$ for some $t\in T$, then $1_x$ and $1_{xt}$ are identified via the maps $\pi_{x,xt}$ and $\pi_{xt,x}$.
For every $x$ the vector space $\Gamma(\calB(y),>x)^0$ is one dimensional and it is generated by the section $(1_z)_{z\in(x,y]}$.


We define $\Gamma_0(\calB(y),>x)$ to be the subspace of sections $(f_z)_{z\in(x,y]}\in \Gamma(\calB(y),>x)$ such that $f_y=0$. Let $i_0:\Gamma_0(\calB(y),>x)\hookrightarrow \Gamma(\calB(y),>x)$ be the inclusion.

Consider the following diagram:
\begin{center}
\begin{tikzpicture}
\node (a) at (4,2) {$\Gamma(\calB(y),>x)$};
\node (b) at (4,0) {$\calB(y)_{\delta x}$};
\node (c) at (0,0) {$R$};
\node (d) at (0,2) {$\Gamma_0(\calB(y),>x)$};
\path[->>] 
(a) edge node[right] {$p$} (b);
\path[->]
(c) edge node[above] {$j$} (b)
(d) edge node[above] {$p_0$} (b);
\path[right hook->] (d) edge node[above] {$i_0$} (a);
\end{tikzpicture}
\end{center}
where $j$ is the morphism of $R$-modules defined by $j(1)=p((1_z)_{z\in (x,y]})$. Let $p_0:=p\circ i_0$.

Recall that if $M$ is a graded $R$-module, then the projective cover of $M$ is isomorphic to $R\otimes_{\bbR} \bar{M}$, with $\bar{M}=M\otimes_R \bbR$.
Therefore it is  easy to see that
\[ q_{x,y} =\dim \left(\bar{\calB(y)_{\delta x}}^2\right)=\codim \left(\ima(j)^2 \cug \calB(y)_{\delta x}^2\right)=\dim\left(\calB(y)_{\delta x}/\ima(j)\right)^2.\]
We have
\[\quot{\Gamma_0(\calB(y),>x)}{p_0^{-1}(\ima(j))}\overset{\bar{i_0}}{\lhook\joinrel\longrightarrow} \quot{\Gamma(\calB(y),>x)}{p^{-1}(\ima(j))}\isom\quot{\calB(y)_{\delta x}}{\ima(j)}.\]

The map $\bar{i_0}$ is also surjective: in fact, if $f=(f_z)_{z\in (x,y]}\in \Gamma(\calB(y),>x)$, we can write
\[ f = (f-\bar{f}) + \bar{f}\]
where $\bar{f}$ is the constant section $\bar{f}:=(f_y)_{z\in (x,y]}\in p^{-1}(j(f_y))$. It follows that
\begin{equation}\label{q=G-p}
q_{x,y} = \dim \Gamma_0(\calB(y),>x)^2 -\dim p_0^{-1}(\ima(j)^2).
\end{equation}


The first term in \eqref{q=G-p} has an immediate combinatorial interpretation.
\begin{lemma}\label{coatoms}
Let $x,y\in W$ with $x<y$.
\begin{enumerate}[i)]
	\item If $\ell(y)-\ell(x)\geq 2$, then $\Gamma_0(\calB(y),>x)\cong \Gamma_0(\calB(y),\geq x)$.
	\item The dimension of $\Gamma_0(\calB(y),\geq x)^2$ is equal to the number of coatoms $c_{x,y}= |\coat(x,y)|$ of $[x,y]$.
\end{enumerate}

\end{lemma}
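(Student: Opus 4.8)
For part~(i) the natural candidate is the restriction map $\rho_0\colon\Gamma_0(\calB(y),\geq x)\to\Gamma_0(\calB(y),>x)$ that forgets the component at $x$; it is well defined because the constraint $f_y=0$ only involves the vertex $y\neq x$. Denote by $p_x\colon\calB(y)_x\to\bigoplus_{t\in A(x),\, xt\leq y}\calB(y)_{x\raw xt}$ the direct sum of the structure maps $\pi_{x,xt}$; by construction it factors as the projective cover $\calB(y)_x\twoheadrightarrow\calB(y)_{\delta x}$ followed by the inclusion into $\bigoplus_t\calB(y)_{x\raw xt}$. Surjectivity of $\rho_0$ is then immediate: for any $f=(f_z)_{z\in(x,y]}\in\Gamma_0(\calB(y),>x)$ its restriction $p(f)$ lies in $\calB(y)_{\delta x}=\ima(p_x)$ by definition, so we may pick $f_x\in\calB(y)_x$ with $p_x(f_x)=p(f)$ and obtain a section on $[x,y]$ still satisfying $f_y=0$. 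The kernel of $\rho_0$ is exactly the space of sections supported at $x$, that is $\ker p_x$, and here $\ell(y)-\ell(x)\geq 2$ enters: by thinness of the Bruhat order (Lemma~\ref{squares}) $x$ has at least two distinct atoms $xr,xt$ in $[x,y]$, so $\alpha_r,\alpha_t$ are linearly independent. Since by Theorem~\ref{KLconj} the free module $\calB(y)_x$ has a \emph{unique} generator in degree $0$ (our chosen $1_x$) and all remaining generators in degree $\geq 2$, one has $\calB(y)_x^2=\frh^*\cdot 1_x\oplus W$ with $W$ spanned by the degree-$2$ generators, and $p_x$ is injective on $\frh^*\cdot 1_x$ (two independent roots occur among the edge labels) and on $W$ (generators go to generators), with independent images; hence $\ker p_x$ vanishes in degree $\le 2$ and $\rho_0$ is an isomorphism in the degrees relevant for what follows.

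For part~(ii) I would reduce to a combinatorial count. If $z\in\coat(x,y)$ then every $u$ with $z<u\leq y$ has $\ell(u)=\ell(y)$ and so equals $y$; thus $y$ is the only vertex of $\calG|_{\leq y}$ lying above $z$, so $P_{z,y}=1$, $\calB(y)_z=R\cdot 1_z$, and the condition $\pi_{z,y}(f_z)=\pi_{y,z}(f_y)=0$ forces $f_z=\lambda_z\,\alpha_{z^{-1}y}\cdot 1_z$ for a unique scalar $\lambda_z$. This defines a linear map $\Phi\colon\Gamma_0(\calB(y),\geq x)^2\to\bbR^{\coat(x,y)}$, $f\mapsto(\lambda_z)_z$, and the assertion is that $\Phi$ is bijective. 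Injectivity: if all $\lambda_z$ vanish then $f$ is zero on every vertex of length $\geq\ell(y)-1$, and I would descend by downward induction on length --- if $w\in[x,y]$ satisfies $\ell(y)-\ell(w)\geq 2$ and $f_{w'}=0$ for all $w'\in(w,y]$, then $p_w(f_w)=0$ (with $p_w$ the analogue of $p_x$ at $w$), so $f_w\in\ker p_w\cap\calB(y)_w^2=0$ by the degree-$2$ argument of part~(i), since $w$ again has at least two atoms above it. Hence $f=0$.

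For surjectivity of $\Phi$, fix $(\lambda_z)_z$ and set $g_y:=0$, $g_z:=\lambda_z\,\alpha_{z^{-1}y}\cdot 1_z$ for the coatoms $z$; these are compatible along the edges $z\raw y$, and since vertices of equal length are non-adjacent in $\calG$, $(g_z)$ is a degree-$2$ section over the vertices of $[x,y]$ of length $\geq\ell(y)-1$. I would then enlarge it one length at a time: at a vertex $w$ of length $k$ the part already defined over $(w,y]$ is a section of $\calB(y)$ over $(w,y]$, hence lies in $\Gamma(\calB(y),>w)$, so its image under the restriction map $p$ lies in $\calB(y)_{\delta w}=\ima(p_w)$, and since $p_w$ is a graded surjection this image lifts to some $g_w\in\calB(y)_w^2$; as length-$k$ vertices are pairwise non-adjacent, this extends the section. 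Descending to $\ell(x)$ produces $g\in\Gamma_0(\calB(y),\geq x)^2$ with $\Phi(g)=(\lambda_z)_z$, so $\dim\Gamma_0(\calB(y),\geq x)^2=c_{x,y}$. The point that carries all the weight --- and what I expect to be the main obstacle --- is the degree-$2$ vanishing of $\ker p_w$ at every $w\in[x,y]$ having at least two elements of $(w,y]$ above it: everything hinges on $\calB(y)_w$ having exactly one generator in degree $0$, which is precisely the content of Theorem~\ref{KLconj}, combined with the thinness of Bruhat intervals from Lemma~\ref{squares}.
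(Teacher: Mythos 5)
Your proof is correct and follows essentially the same strategy as the paper: the crux is the degree-$2$ injectivity of the restriction at any vertex $w$ with at least two atoms above it (two linearly independent edge-labels plus the projective-cover property force $\ker p_w$ to vanish in degree $\le 2$), and the reduction of degree-$2$ sections to their data on coatoms. Your packaging of part~(ii) as an explicit bijection $\Phi$ onto $\bbR^{\coat(x,y)}$, proved by descending induction on length, is a slightly different bookkeeping of what the paper phrases as the chain $\Gamma_0(\calB(y),\geq x)^2 \cong \Gamma_0(\calB(y),\coat(x,y)\cup\{y\})^2 \cong \bigoplus_{z\in\coat(x,y)}\Gamma_0(\calB(y),\geq z)^2$, but the substance is identical; the only place you could tighten is the phrase ``generators go to generators, with independent images,'' which is most cleanly justified by noting that $\ker\bigl(\calB(y)_x \twoheadrightarrow \calB(y)_{\delta x}\bigr)\subseteq \rad\calB(y)_x$ and that $(\rad\calB(y)_x)^2=\frh^*\cdot 1_x$.
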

\begin{proof}
First assume $\ell(y)-\ell(x)\geq 2$. From Lemma \ref{squares} it follows that  there are (at least) two reflections $t_1,t_2\in A(x)$ such that $xt_1,xt_2\leq y$. 
We claim that the morphism $j: R \raw \calB(y)_{\delta x}$ is injective in degrees $0$ and $2$. Let $f\in R$  such that $j(f)=0$, so we have $\alpha_{t_1}\mid f$ and $\alpha_{t_2}\mid f$. Since $\alpha_{t_1}$ and $\alpha_{t_2}$ are linearly independent we conclude that $f=0$ or $\deg(f)\geq 4$. 

Since $\calB(y)_x$ is the projective cover of $\calB(y)_{\delta x}$, it  follows that the map $\calB(y)_{x}\raw \calB(y)_{\delta x}$ is bijective in degrees $0$ and $2$.
Hence if $\ell(y)-\ell(x)\geq 2$, every section in $\Gamma(\calB(y),>x)^2$ extends uniquely to a section in $\Gamma(\calB(y),\geq x)^2$ and i) follows. By repeating this argument we see that every section in $\Gamma_0(\calB(y),\coat(x,y) \cup \{y\})^2$ extends uniquely to a section in $\Gamma_0(\calB(y),\geq x)^2$. Moreover, we have 
\[\Gamma_0(\calB(y),\geq x)^2 \cong\Gamma_0(\calB(y),\coat(x,y) \cup \{y\})^2\cong \bigoplus_{z\in \coat(x,y)} \Gamma_0(\calB(y),\geq z)^2.\]

Hence it is enough to consider the case $\ell(y)-\ell(x)=1$. Then $\calB(y)_x\cong R$ and a section in $\Gamma_{0}(\calB(y),\geq x)$ can be thought of as a polynomial $f\in \calB(y)_x$ such that $\alpha_{x^{-1}y}\mid f$. It follows that $\dim \Gamma_0(\calB(y),\geq x)^2=1$.
\end{proof}

 We give now a more insightful description of the vector space $p_0^{-1}(\ima(j)^2)$.

%
%
%

\begin{prop}
	Let $x,y\in W$ with $x<y$ and $\ell(y)-\ell(x)\geq 2$. We have:
\[ \Gamma_0(\calA,[x,y])^2\cong  p_0^{-1}(\ima(j))^2.\]
\end{prop}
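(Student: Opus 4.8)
The plan is to construct an explicit isomorphism by interpreting both sides as spaces of compatible tuples indexed by the interval $[x,y]$. First I would unwind the definition of $p_0^{-1}(\ima(j))^2$. A section $f=(f_z)_{z\in(x,y]}\in\Gamma_0(\calB(y),>x)^2$ lies in $p_0^{-1}(\ima(j))$ precisely when $p_0(f)=j(r)$ for some $r\in R^2$, i.e. when for every $t\in A(x)$ with $xt\le y$ we have $f_{xt}\equiv r\cdot 1_{xt}\pmod{\alpha_t}$ in $\calB(y)_{x\raw xt}$. Since $f_y=0$, applying this at a coatom-type edge forces $r\cdot 1_y\equiv 0$, but $1_y$ generates a free rank-one summand, so in fact $r=0$ unless the relevant atom $xt$ is itself $\le y$ but not below $y$ in the needed way; more carefully, because $\ell(y)-\ell(x)\ge 2$ there are two independent roots $\alpha_{t_1},\alpha_{t_2}$ as in Lemma~\ref{squares}, and the same linear-independence argument used in Lemma~\ref{coatoms} shows $r$ is determined and in fact the condition reduces to: $f_{xt}\in\alpha_t\cdot\calB(y)_{xt}^0$ for every $t\in A(x)$ with $xt\le y$. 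So $p_0^{-1}(\ima(j))^2$ is the space of degree-$2$ sections over $(x,y]$ that vanish at $y$ \emph{and} vanish modulo $\alpha_t$ at each atom $xt$.

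Next I would build the map to $\Gamma_0(\calA,[x,y])^2$. Recall $\calA_z=R$ for all $z$ and $\calA_{z\raw zt}=R/(\alpha_t)$; a section in $\Gamma_0(\calA,[x,y])$ is a tuple $(g_z)_{z\in[x,y]}$ of polynomials, compatible across every edge modulo the edge label, with $g_y=0$. Given $f=(f_z)_{z\in(x,y]}\in p_0^{-1}(\ima(j))^2$, I want to produce such a tuple. The key structural input is that $\calB(y)_x$ is bijective onto $\calB(y)_{\delta x}$ in degrees $0$ and $2$ (proved inside Lemma~\ref{coatoms}), so this kind of section extends uniquely down one step; iterating, $f$ extends uniquely to $\Gamma_0(\calB(y),\ge x)^2$. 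To get from $\calB(y)$-valued data to $R$-valued data, I would use that in low degree each $\calB(y)_z$ for $z\in[x,y]$ contains the canonical free summand $R\cdot 1_z$, and in degree $\le 2$ the complement is controlled; concretely, write $f_z = g_z\cdot 1_z + (\text{rest})$ and check the "rest" is forced to be zero in degree $2$ by the compatibility relations together with the vanishing $f_y=0$. That produces the tuple $(g_z)_{z\in[x,y]}$ with $g_z\in R^2$, and the $\calB(y)$-compatibility across an edge $z\raw zt$ projects to $g_z\equiv g_{zt}\pmod{\alpha_t}$, which is exactly $\calA$-compatibility; and $g_x=0$ because we started in $\Gamma_0$ and set $g_x$ to be the value of $f$ at $x$ after extending, which is $0$ by construction of the extension (or: define $g_x:=0$ and verify consistency). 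Conversely, given $(g_z)_{z\in[x,y]}\in\Gamma_0(\calA,[x,y])^2$, set $f_z:=g_z\cdot 1_z\in\calB(y)_z^2$ for $z\in(x,y]$; the $\calA$-relations lift to $\calB(y)$-relations because $1_z,1_{zt}$ are identified across edges, $f_y=g_y\cdot 1_y=0$, and the atom condition $g_x\equiv g_{xt}\pmod{\alpha_t}$ with $g_x=0$ gives $f_{xt}\in\alpha_t\cdot\calB(y)_{xt}$, i.e. $f\in p_0^{-1}(\ima(j))$. These two assignments are mutually inverse.

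The main obstacle I anticipate is the step showing that a degree-$2$ section of $\calB(y)$ over $[x,y]$ (or over $(x,y]$, vanishing at $y$) is necessarily of the pure form $(g_z\cdot 1_z)_z$ — i.e. that no "genuinely higher" part of $\calB(y)_z$ can appear. This is where one must use that $\calB(y)_z = R\otimes_\bbR\overline{\calB(y)_z}$ with $\overline{\calB(y)_z}$ having Poincaré polynomial $P_{z,y}(q^2)$ (Theorem~\ref{KLconj}) and, crucially, that a section vanishing at the top $y$ cannot be supported on the free summands $R[-2]$ that detect $q_{z,y}$ without propagating a contradiction down the interval; the induction on $\ell(y)-\ell(z)$ using the projective-cover bijectivity in degrees $0$ and $2$ is the right engine, but one has to be careful that the bijectivity statement is exactly "degrees $0$ and $2$" and not more, so that the argument does not over-reach. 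Once that normal-form statement is in hand, the rest is the bookkeeping of compatibility relations sketched above, and the isomorphism is $R$-linear and degree-preserving by construction.
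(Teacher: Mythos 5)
Your argument breaks at the very first step, the unwinding of $p_0^{-1}(\ima(j))^2$. The condition $p_0(f)=j(r)$ says that $f_{xt}\equiv r\cdot 1_{xt}\pmod{\alpha_t}$ at every atom $xt$ for one common polynomial $r\in R^2$, and nothing forces $r=0$: the vanishing $f_y=0$ plays no role here, because $p_0$ only involves the edges emanating from $x$, and $y$ is not adjacent to $x$ since $\ell(y)-\ell(x)\geq 2$; the linear-independence argument of Lemma \ref{coatoms} shows that $r$ is \emph{unique}, not that it vanishes. Your reduction to ``$f_{xt}\in\alpha_t\cdot\calB(y)_{xt}$ for all atoms'' therefore replaces $p_0^{-1}(\ima(j))^2$ by $\ker(p_0)^2$, which is strictly smaller: already for a length-two interval (a single diamond) the space $p_0^{-1}(\ima(j))^2$ has dimension $d_{x,y}=2$, while your condition forces $f=0$ because the coatom label $\alpha_{x t_i\,\cdot\,(xt_i)^{-1}y}$ and $\alpha_{t_i}$ are linearly independent -- so your claimed isomorphism would equate a $2$-dimensional space with $0$. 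The companion claim ``$g_x=0$'' is wrong for the same reason: a section in $\Gamma_0(\calA,[x,y])$ is only required to vanish at $y$, and under the intended correspondence $g_x$ is exactly the polynomial $r$ (the unique degree-$2$ preimage of $j(r)$ in $\calB(y)_x$ is $r\cdot 1_x$, not $0$), which is generically nonzero. With $g_x:=0$ your forward assignment does not produce a section of $\calA$ and your two maps are not mutually inverse. Keeping $r=g_x$ repairs this bookkeeping, but one must then also prove injectivity of the restriction from $[x,y]$ to $(x,y]$, which you do not address.

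The second gap is the purity statement that you yourself flag as the main obstacle: you never prove it, and the engine you propose points the wrong way. The correct induction runs \emph{upward} from the atoms: the base case is precisely the $\ima(j)$ condition, since the image of $j$ in $\calB(y)_{x\raw xt}=\calB(y)_{xt}/(\alpha_t)$ lies in the summand $R/(\alpha_t)$ for degree reasons, which kills the (constant) component of $f_{xt}$ in $R[-2]^{q_{xt,y}}$; and the inductive step at $z$ uses a \emph{downward} edge $zt\lessdot z$, because then the edge module is $\calB(y)_z/(\alpha_t)$ and the projection respects the decomposition $R\oplus R[-2]^{q_{z,y}}\oplus\ldots$, so comparison with the already pure $f_{zt}$ forces $f_{z,2}=0$. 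An induction downward from $y$ based only on $f_y=0$, which is what you sketch, does not go through: along an upward edge $z\raw zt$ the edge module is $\calB(y)_{zt}/(\alpha_t)$, and the map from $\calB(y)_z$ to it need not be injective on the degree-$2$ part of the $R[-2]^{q_{z,y}}$ summand (for instance $q_{zt,y}$ may vanish while $q_{z,y}>0$), so compatibility with a pure $f_{zt}$ does not force $f_{z,2}=0$. Note also that purity is being asserted only on the subspace cut out by the atom condition; it should not be expected for all of $\Gamma_0(\calB(y),>x)^2$, whose dimension is $c_{x,y}$ by Lemma \ref{coatoms}, whereas the pure sections form a copy of $\Gamma_0(\calA,(x,y])^2$. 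So the atom condition cannot be discarded in favour of $f_y=0$; it is the base of the induction and, as explained above, it is also exactly the datum recording the value $g_x=r$ of the corresponding $\calA$-section.
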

\begin{proof}
Since $\ell(y)-\ell(x)\geq 2$, the same argument as in the proof of Lemma \ref{coatoms} shows that the restriction map $\Gamma_0(\calA,[x,y])^2\raw \Gamma_0(\calA,(x,y])^2$ is injective.
Moreover, the map $ \Gamma_0(\calA,(x,y])\raw p_0^{-1}(\ima(j))$ induced by the inclusions $R\hookrightarrow \calB(y)_z$ for $z\in (x,y]$ is clearly injective.

It remains to show that the composition map $\varphi:\Gamma_0(\calA,[x,y])^2\raw p_0^{-1}(\ima(j))^2$ is surjective.
Let $f=(f_z)_{z\in (x,y]}\in \Gamma_0(\calB(y),>x)^2$ and assume that $f\in p_0^{-1}(\ima(j)^2)$. For every $z\in (x,y]$, we have $\calB(y)_z=R\oplus R[-2]^{q_{z,y}}\oplus \ldots$, so we can write $f_z=f_{z,0}+f_{z,2}$ with $f_{z,0}\in R$ and $f_{z,2}\in R[-2]^{q_{z,y}}$. 

Let $a$ be an atom in $[x,y]$ with $t=x^{-1}a\in T$. The image of the map 
\[R\raw \calB(y)_{x\raw a}=\calB(y)_a/(\alpha_t)=R/(\alpha_t)\oplus R/(\alpha_t)[-2]^{q_{a,y}}\oplus \ldots \]
is contained in the summand $R/(\alpha_t)$ for degree reasons. Then the assumption $f\in p_0^{-1}(\ima(j)^2)$ forces to have $f_{a,2}=0$.

Fix $z\in (x,y]$ and assume that $f_{w,2}=0$ for all $w<z$. If $z$ is not an atom in $[x,y]$, we can choose $t\in D(z)$ such that $zt\in (x,z]$. Since $f_{zt}=f_{zt,0}$ the same argument as above shows $f_{z,2}=0$. Hence by induction we obtain that $f_{z,2}=0$ for any $z\in (x,y]$ or, equivalently, that $f_z$ is contained in the summand $R\cug \calB(y)_z$. 

This implies that every section $f\in p_0^{-1}(\ima(j)^2)$ can be thought as a section of the structure sheaf $\calA$, hence it is in the image of $\varphi$.
\end{proof}

Notice that, after a trivial check in the case $\ell(y)-\ell(x)\leq 1$, we can rewrite \eqref{q=G-p} as
\[ q_{x,y} = \dim \Gamma_0(\calB(y),\geq x)^2 - \dim \Gamma_0(\calA,[x,y])^2= c_{x,y} -\dim \Gamma_0(\calA,[x,y])^2.\]
As a consequence of \eqref{q=c-d} and Lemma \ref{coatoms}  we obtain $\dim \Gamma_0(\calA,[x,y])^2=d_{x,y}$.

We identify $R^2$ with the vector space $\frh^*$. Under this identification 
$\Gamma_0(\calA,[x,y])$ corresponds to the vector space
\[ V_{x,y} := \left\{ (v_z)_{z\in [x,y]}\in \bigoplus_{z\in [x,y]} \frh^*\;\middle|\; 
\begin{tabular}{c}
$v_y=0$ and \\ 
$v_{zt}\in v_z+\bbR\alpha_t$ for any $z\in [x,y]$ and $t\in T$\\ such that $zt\in [x,y]$
\end{tabular}
\right\}.\]

We use this identification to give an upper bound to $\dim\Gamma_0(\calA,[x,y])^2=\dim V_{x,y} =d_{x,y}$. 

\begin{remark}
	The formula $q_{x,y}=c_{x,y}-\dim V_{x,y}$ already appeared in \cite{Dye2}. In his work Dyer proved this formula using different and more elementary techniques as moment graphs and Theorem \ref{KLconj} were still not available at that time. From this formula he derived a new proof of the positivity of the coefficient $q_{x,y}$ for arbitrary Coxeter groups (the positivity was originally proved by Tagawa in \cite{Tag}). 
	
	We remark that one could in principle rely  on Dyer's work and avoid the recourse to Theorem \ref{KLconj} also in the remainder of this work. 
	We choose to  reprove this formula in the moment graph setting since we hope (and believe) similar methods can be used to investigate also other coefficients of Kazhdan-Lusztig polynomials.
\end{remark}

In what follows we regard the moment graph solely as an undirected labelled graph, as the orientation of the arrows will not play any role since we are working with $\calA$ (cf. Example \ref{strsheaf}). 
We denote by $E_{x,y}$ the set of edges in $\calG|_{[x,y]}$. We denote an edge between $z$ and $w$ by $(z\rla w)$. For $e=(z\rla zt)\in E_{x,y}$ we define the function 
$\lambda_e(v): V_{x,y}\raw \bbR$ by $\lambda_e(v)=(v_z-v_{zt})/(\alpha_t)$. We will  sometimes denote the function $\lambda_e$ simply by $\lambda_{z,zt}$.

\begin{definition}
	We call \emph{diamond} any $4$-cycle in $\calG|_{[x,y]}$ consisting of four edges between four different vertices.
	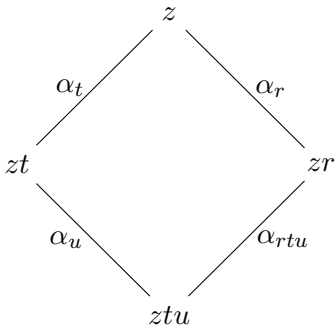
\begin{figure}[!h]
		\begin{center}
			\begin{tikzpicture}
			\node (a) at (0,2) {$z$};
			\node (b) at (-2,0) {$zt$};
			\node (c) at (2,0) {$zr$};
			\node (d) at (0,-2) {$ztu$};
			\path[-] 
			(a) edge node[left] {$\alpha_t$} (b)
			(c) edge node[right] {$\alpha_r$} (a)
			(b) edge node[left] {$\alpha_u$} (d)
			(c) edge node[right] {$\alpha_{rtu}$} (d);
			\end{tikzpicture}
		\end{center}
		\caption{A diamond}
		\label{figsq}
	\end{figure} 
\end{definition}

Notice that in every diamond the labels of a pair of adjacent edges are linearly independent while the span of all its labels has dimension $2$ (cf. \cite[Lemma 3.1]{Dye}).

The following key observation is inspired by \cite[Example 2.3]{BMP}.
Assume we have a diamond in $\calG|_{[x,y]}$ as in Figure \ref{figsq}
i.e. $z,zt,zr,ztu\in [x,y]$ with $t,r,u,rtu\in T$. 
Since $\alpha_t$ and $\alpha_r$ are linearly independent, for any $(v_z)\in V_{x,y}$ the intersection of the lines $v_{zt}+\bbR \alpha_t$ and $v_{zr}+\bbR \alpha_r$ is the point $v_z$. This means that $v_{zt}$ and $v_{zr}$ uniquely determine $v_z$ (and $v_{ztu}$). Equivalently, the real numbers  $\lambda_{z,zt}(v)$ and $\lambda_{z,zr}(v)$ uniquely determine $\lambda_{zt,ztu}(v)$ and $\lambda_{zr,ztu}(v)$.

\begin{definition}
Let $F$ be a subset of $E_{x,y}$. 
We say that $F$ is \emph{diamond closed} when for any $z,zt,zr,ztu$ forming a diamond as in Figure \ref{figsq}, if  $(z\rla zt), (z\rla zr)\in F$, then $(zt\rla ztu), (ztu\rla zr)\in F$.

We denote by $F^\diamond$ the smallest diamond closed subset of $E_{x,y}$ such that $F\cug F^\diamond$. We call $F^\diamond$ the \emph{diamond closure} of $F$.

We call a subset of edges $F$ \emph{diamond generating} if $F^\diamond=E_{x,y}$.
\end{definition}

The diamond closure $F^\diamond$ is well-defined for any $F$ and it can be simply obtained as follows. If $F\neq F^\diamond$, there exists a diamond $\calD$ in $\calG|_{[x,y]}$ such that two adjacent edges of $\calD$ are in $F$ but not all the edges of  $\calD$  are contained in $F$. Then we build a new set $F'=F\cup \{$edges of $\calD\}$. Clearly we have $F\subsetneqq F'\cug F^\diamond$, so we can replace $F$ with $F'$ and repeat this operation until we obtain $F=F^\diamond$.





\begin{definition}
We define 
\[g_{x,y}:=\min\{ |F| \mid F \text{ diamond generating subset of  }E_{x,y}\}.\] 
\end{definition}

In section \ref{secProof} we show that for simply-laced Weyl groups $g_{x,y}=d_{x,y}$, and doing so we assign a combinatorial meaning to the coefficient $d_{x,y}$. Here we show first in full generality one inequality.
\begin{prop}
Let $x,y\in W$ with $x<y$. We have $ d_{x,y}\leq g_{x,y}$.
\end{prop}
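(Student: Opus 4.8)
The plan is to exhibit, for any diamond generating set $F\cug E_{x,y}$, an injective linear map $V_{x,y}\hookrightarrow\bbR^{F}$. Since $\dim V_{x,y}=d_{x,y}$ as observed above, this gives $d_{x,y}\leq |F|$, and minimising over all diamond generating $F$ yields $d_{x,y}\leq g_{x,y}$.

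Concretely, consider the map $\Lambda_F\colon V_{x,y}\to\bbR^{F}$ sending $v=(v_z)_{z\in[x,y]}$ to $(\lambda_e(v))_{e\in F}$; linearity is immediate from the definition of $\lambda_e$. I claim $\Lambda_F$ is injective whenever $F$ is diamond generating. Let $v\in\ker\Lambda_F$, i.e. $\lambda_e(v)=0$ for all $e\in F$. The key step is to propagate this vanishing to all of $E_{x,y}$. Recall from the explicit construction of the diamond closure that there is a finite chain $F=F_0\subsetneq F_1\subsetneq\cdots\subsetneq F_N=F^\diamond=E_{x,y}$ in which each $F_{i+1}$ is obtained from $F_i$ by adjoining the four edges of a diamond $z,zt,zr,ztu$ (as in Figure \ref{figsq}) two of whose adjacent edges, say $(z\rla zt)$ and $(z\rla zr)$, already belong to $F_i$. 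By induction on $i$ it suffices to show that $\lambda_{z,zt}(v)=\lambda_{z,zr}(v)=0$ forces $\lambda_{zt,ztu}(v)=\lambda_{zr,ztu}(v)=0$. Indeed, $v_{zt}=v_z+\lambda_{z,zt}(v)\alpha_t=v_z$ and likewise $v_{zr}=v_z$, so the relation $v_{ztu}=v_{zt}+\lambda_{zt,ztu}(v)\alpha_u=v_{zr}+\lambda_{zr,ztu}(v)\alpha_{rtu}$ gives $\lambda_{zt,ztu}(v)\alpha_u=\lambda_{zr,ztu}(v)\alpha_{rtu}$; as $\alpha_u$ and $\alpha_{rtu}$ label two adjacent edges of a diamond they are linearly independent, whence both coefficients vanish.

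Thus $\lambda_e(v)=0$ for every $e\in E_{x,y}$, which means $v_z=v_{zt}$ for each edge $(z\rla zt)$ of $\calG|_{[x,y]}$. The graph $\calG|_{[x,y]}$ is connected: for any $z\in[x,y]$ with $z\neq y$ there is a cover $z\lessdot z'$ with $z'\leq y$, which is an edge of $\calG|_{[x,y]}$, so by induction on $\ell(y)-\ell(z)$ every vertex is joined to $y$. Hence all the $v_z$ are equal, and since $v_y=0$ we conclude $v=0$. This proves $\Lambda_F$ is injective, so $\dim V_{x,y}\leq|F|$; taking $F$ a diamond generating set of minimal size gives $d_{x,y}=\dim V_{x,y}\leq g_{x,y}$.

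The only genuinely delicate point is the propagation step in the second paragraph — that vanishing of $\lambda_e(v)$ on two adjacent edges of a diamond forces vanishing on the remaining two — which relies precisely on the linear independence of the labels of adjacent diamond edges (cf. \cite[Lemma 3.1]{Dye}); the connectivity of $\calG|_{[x,y]}$ and the passage from injectivity to the dimension bound are routine.
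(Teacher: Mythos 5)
Your proof is correct and follows essentially the same strategy as the paper: both exhibit the linear map $v\mapsto(\lambda_e(v))_{e\in F}$ from $V_{x,y}$ to $\bbR^F$ and argue injectivity by propagating information through the diamond closure, using $v_y=0$ as the anchor. The only difference is presentational — you spell out injectivity via a kernel argument (propagating the vanishing of $\lambda_e(v)$ along the chain $F=F_0\subsetneq\cdots\subsetneq F^\diamond$ and invoking connectivity of $\calG|_{[x,y]}$), whereas the paper states more briefly that the $\lambda_e(v)$ for $e\in F$ determine all $\lambda_f(v)$ and hence all $v_z$; this is the same content.
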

\begin{proof}

We need to show that for every diamond generating set $F$ we have $d_{x,y}\leq |F|$.

Assume $F$ is a diamond generating subset of $E_{x,y}$ and let $v\in V_{x,y}$.
We immediately see from the construction of $F^\diamond$ given above that
 the numbers  $\lambda_f(v)$ for every $f\in F^\diamond=E_{x,y}$ are uniquely determined by $(\lambda_e(f))_{e\in F}$. Moreover, recall that we have $v_y=0$. Thus  $v_z$ for every $z\in [x,y]$ is uniquely determined by $(\lambda_e(v))_{e\in F}$. 


Let $\bbR^F$ be a vector space of dimension $|F|$. Then from the discussion above it follows that the linear map
\[V_{x,y}\raw \bbR^F\]
\[ v\mapsto (\lambda_e(v))_{e\in F}\]
is injective. Hence $d_{x,y}=\dim V_{x,y}\leq |F|$.
\end{proof}

Furthermore, we can restrict ourselves to look at the Hasse diagram of $[x,y]$, i.e. it is enough to consider only edges $(z-zt)$ in $E_{x,y}$ with $|
\ell(zt)-\ell(z)|=1$. In fact, as it is shown in the the proof of \cite[Proposition 3.3]{Dye}, if $z\in W$ and $t\in A(z)$  are such that $\ell(zt)-\ell(z)>1$, then there exists a subgraph of $\calG_{[z,zt]}$ of the following form:
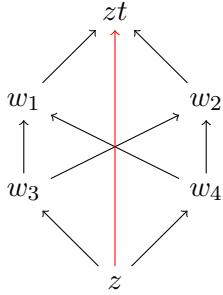
\begin{figure}[!ht]
\begin{center}
\begin{tikzpicture}[scale = 0.6]
\node (a) at (0,2) {$zt$};
\node (b) at (-2,0) {$w_1$};
\node (c) at (2,0) {$w_2$};
\node (d) at (-2,-2) {$w_3$};
\node (e) at (2,-2) {$w_4$};
\node (f) at (0,-4) {$z$};
\path[->] 
(f) edge (d)
(f) edge (e)
(d) edge (b) edge (c)
(e) edge (b) edge (c) 
(b) edge (a)
(c) edge (a);
\path[->, color = red]
(f) edge  (a); 
\end{tikzpicture}
\end{center}
\caption{A long arrow $(z-zt)$ in $\calG|_{[x,y]}$}
\label{exa}
\end{figure}

\begin{lemma}\label{shortedges}
If $F$ is a subset of edges such that $F^\diamond$ contains all the edges $(z-zt)$ in $E_{x,y}$ with $|\ell(zt)-\ell(z)|=1$ then $F$ is diamond generating.
\end{lemma}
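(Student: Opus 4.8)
The plan is to reduce the lemma, by a purely formal step, to the special case $F=H$, where $H\cug E_{x,y}$ denotes the set of all \emph{Hasse edges}, i.e.\ the edges $(z-zt)$ with $|\ell(zt)-\ell(z)|=1$; that is, it suffices to prove $H^\diamond=E_{x,y}$. Indeed, $F^\diamond$ is by construction a diamond close set, and the hypothesis says $H\cug F^\diamond$; since $H^\diamond$ is the \emph{smallest} diamond close set containing $H$, this forces $H^\diamond\cug F^\diamond$, and then $H^\diamond=E_{x,y}$ yields $F^\diamond=E_{x,y}$, which is exactly the assertion that $F$ is diamond generating.

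To prove $H^\diamond=E_{x,y}$ I would argue by strong induction on the length $\ell(zt)-\ell(z)$ of an edge, showing that every edge of $E_{x,y}$ lies in $H^\diamond$. If $\ell(zt)-\ell(z)=1$ the edge lies in $H\cug H^\diamond$. Otherwise, since any edge $(z-zt)$ has $\ell(zt)-\ell(z)$ odd (the sign character is a homomorphism and reflections have odd length), we have $\ell(zt)-\ell(z)=k\geq 3$. Write $e=(z-zt)$; then $t\in A(z)$ and $[z,zt]\cug[x,y]$, and we may apply the configuration extracted from the proof of \cite[Proposition 3.3]{Dye} and recalled in Figure \ref{exa}: there are vertices $w_1,w_2,w_3,w_4$, all lying in the open interval $(z,zt)$ and fitting into the chains $z<w_j<w_i<zt$ for all $i\in\{1,2\}$, $j\in\{3,4\}$, such that $\calG|_{[z,zt]}$ contains the eight ``black'' edges $(z-w_3)$, $(z-w_4)$, $(w_3-w_1)$, $(w_3-w_2)$, $(w_4-w_1)$, $(w_4-w_2)$, $(w_1-zt)$, $(w_2-zt)$.

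From these chains one reads off that each black edge $(u-v)$ with $u<v$ satisfies $\ell(v)-\ell(u)\leq k-2$ (for instance $\ell(w_3)\leq\ell(w_1)-1\leq\ell(zt)-2$ and $\ell(w_1)\geq\ell(w_3)+1\geq\ell(z)+2$, and likewise for the other edges); hence by the inductive hypothesis all eight black edges already belong to $H^\diamond$. Now consider the $4$-cycle $\calD$ on the four distinct vertices $z,zt,w_1,w_3$, with edges $e=(z-zt)$, $(zt-w_1)$, $(w_1-w_3)$, $(z-w_3)$ — all four of which are edges of $\calG|_{[z,zt]}\cug\calG|_{[x,y]}$ by the previous paragraph, so $\calD$ is a diamond. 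In $\calD$ the vertices $w_1$ and $z$ are opposite, and the two edges of $\calD$ incident to $w_1$, namely $(zt-w_1)$ and $(w_1-w_3)$, lie in $H^\diamond$; since $H^\diamond$ is diamond close, the two edges of $\calD$ incident to the opposite vertex $z$, namely $e=(z-zt)$ and $(z-w_3)$, lie in $H^\diamond$ as well. This completes the induction, so $H^\diamond=E_{x,y}$ and the lemma follows.

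The one step that needs genuine care is the handling of Dyer's configuration: one must check that \emph{every} black edge of Figure \ref{exa} is strictly shorter than $e$ — which is exactly why the $w_i$ sit strictly inside $(z,zt)$ and form the displayed chains — and one must locate, among the $4$-cycles through the long edge $e$, one whose remaining three edges are already available; the cycle $z-zt-w_1-w_3-z$ does this. The other ingredients — the reduction to $F=H$, the parity remark that no edge has length $2$ (so that the $6$-vertex configuration of Figure \ref{exa} is available in the inductive step), and the fact that any $4$-cycle on four distinct vertices of $\calG$ is a diamond — are routine.
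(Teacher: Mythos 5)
Your proof is correct and follows essentially the same route as the paper: both arguments induct on the length difference of an edge and use the diamond on the vertices $z,zt,w_1,w_3$ from Dyer's configuration in Figure \ref{exa} to force a long edge into the closure. Your additional remarks (the formal reduction to the set of Hasse edges, the parity observation ensuring $\ell(zt)-\ell(z)\geq 3$, and the explicit check that the three remaining edges of the diamond are strictly shorter) are just a more careful write-up of the same idea.
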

\begin{proof}
If $F$ is not diamond generating, we can take an edge $(z-zt)$ such that $(z-zt)\not \in F^\diamond$ with $|\ell(z)-\ell(zt)|$  minimal amongst all the edges not in $F^\diamond$. Then using (for example) the diamond with edges $z,zt, w_1$ and $w_3$ in Figure \ref{exa} we see that $F^\diamond$ must also contain the edge $(z-zt)$.
\end{proof}

\begin{remark}[Upper bounds on $g_{x,y}$]\label{lowerbounds}
The set of edges 
\[F=\{(w-y)\mid w\in \coat(x,y)\}\]
 is a diamond generating set. 
 In fact, assume $z\in [x,y]$ with $\ell(y)-\ell(z)\geq 2$ and  consider an edge $(z-zt)$ with $zt\gtrdot z$. Then if we take any $w\in [x,y]$ such that $w\gtrdot zt$, we see by Lemma \ref{squares} that the 
 the elements in the interval $[z,w]$ form a diamond. Hence, by induction on $\ell(y)-\ell(z)$, we get $(z-zt)\in F^\diamond$ and, by Lemma \ref{shortedges}, it follows that $F$ is diamond generating.
 The same also holds for the set 
 \[F=\{(x-z)\mid z\in \at(x,y)\}.\]
We obtain $g_{x,y}\leq \min\{ |\at(x,y)|,|\coat(x,y)|\}$.
 

If $x=z_0\lessdot z_1\lessdot \ldots \lessdot z_{\ell(y)-\ell(x)}=y$ is a maximal chain from $x$ to $y$ then the set 
\[F = \{ (z_i -z_{i+1}) \mid 0\leq i \leq \ell(y)-\ell(x)-1\}\]
is diamond generating. This is an immediate consequence of the shellability of Bruhat intervals \cite{BjW}. It follows $g_{x,y}\leq \ell(y)-\ell(x)$.
\end{remark}

\begin{example}
Let $W$ be a Weyl group of type $A_3$ with simple reflections $S=\{s,t,u\}$. We draw the graph $\calG|_{[t,tsut]}$.

\begin{center}
\begin{tikzpicture}
\node (tsut) at (3,6) {$tsut$};
\node (sut) at (0,4) {$sut$};
\node (sts) at (2,4) {$sts$};
\node (tsu) at (4,4) {$tsu$};
\node (tut) at (6,4) {$tut$};
\node (st) at (0,2) {$st$};
\node (ts) at (2,2) {$ts$};
\node (tu) at (4,2) {$tu$};
\node (ut) at (6,2) {$ut$};
\node (t) at (3,0) {$t$};
\path[-] 
(t)  edge (ut)
(st) edge (sts) edge (sut)
(ut) edge (sut) edge (tut)
(ts) edge (sts) edge (tsu)
(tu) edge (tsu) edge (tut)
(tsut) edge (sut) edge (sts) edge (tsu) edge (tut) 
;
\path[-, color = red]
(t) edge (st) edge (tu);
\path[-, color = orange]
(ts) edge (t);

\end{tikzpicture}
\end{center}
If we take as $F_1$ the set $\{(t-st), (t-tu)\}$ then $F_1=F_1^\diamond$ as there is no diamond containing the two edges in $F_1$.

If we take as $F_2$ the set $\{(t-st), (t-tu), (t-ts)\}$ then $F_2$ is diamond generating. For example, we can look at the diamonds with vertices $t,st,ts,sts$ and $t,ts,tu,tsu$ to see that $(ts-sts)$ and $(ts-tsu)$ are in $F_2^\diamond$. Hence also $(sts-tsut)\in F_2^\diamond$, so $F_2^\diamond$ contains a maximal chain in $[x,y]$ and we can conclude by Remark \ref{lowerbounds}.

Since $d_{t,tsut}=3$, this also shows that $g_{x,y}=|F_2|=3$.
\end{example}

\section{The generalised lifting property}\label{secGLP}

In this section we assume that $W$ is finite and simply-laced, i.e. that $W$ is a Weyl group of type $A$, $D$ or $E$. Recall the partial order $\succ$ on $T$ from Section \ref{secCG}.

\begin{definition}\label{minrefdef}
Assume $\ell(y)>\ell(x)$. Then a minimal element $t\in AD(x,y)$ with respect to $\prec$ is called a \emph{minimal reflection} for $(x,y)$.
\end{definition}

\begin{theorem}[{\cite[Prop. 5.3]{TW}, \cite[Prop. 5.4]{CS}}]\label{GLT}
Let $x<y$ in $W$ and let $t$ be a minimal reflection for $(x,y)$. Then we have:
\begin{itemize} 
\item $x\leq yt\lessdot y$
\item$x\lessdot xt\leq y$.
\item $\displaystyle R_{x,y}(q) = (q-1) R_{x,yt}(q) +q R_{xt,yt}(q)$.
\end{itemize}
\end{theorem}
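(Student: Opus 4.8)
The plan is to establish the three assertions in turn: the first two are statements about the Bruhat order (the ``generalised lifting property'' proper), and the third, the $R$-polynomial recursion, then follows from them by induction. Concretely, I would (i) prove $yt\lessdot y$; (ii) deduce $x\lessdot xt$, $x\le yt$ and $xt\le y$; and (iii) deduce the $R$-polynomial identity by induction on $\ell(y)$.

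For (i): since $t\in D(y)$ we have $yt<y$, and as $\ell(y)-\ell(yt)$ is always odd the point is to exclude a length drop of $3$ or more. I would argue by contradiction, the crux being the combinatorial fact that $yt$ fails to be a coatom of $[e,y]$ exactly when $\alpha_t$ can be written as $\alpha_{r_1}+\alpha_{r_2}$ with $r_1,r_2\in D(y)$. The non-trivial implication here --- ``not a coatom $\Rightarrow$ such a decomposition exists'' --- is the type-specific heart of the argument: one takes a coatom $yr$ of $[yt,y]$ (so $r\in D(y)$, $r\neq t$), analyses the rank-two reflection subgroup generated by $r$ and $t$ (as in Lemma \ref{squares}), uses \eqref{Dimplies} to produce the decomposition $\alpha_t=\alpha_r+\alpha_{rtr}$ when $r\in D(t)$, and uses Lemma \ref{23} to run the necessary iteration in types $D$ and $E$. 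Granting this, suppose $yt$ is not a coatom and pick $r_1,r_2\in D(y)$ with $\alpha_{r_1}+\alpha_{r_2}=\alpha_t$; then $\alpha_{r_1},\alpha_{r_2}\prec\alpha_t$, and since $t\in A(x)$ we have $x(\alpha_t)=x(\alpha_{r_1})+x(\alpha_{r_2})>0$, so $x(\alpha_{r_i})>0$ for some $i$, i.e.\ $r_i\in A(x)$ --- hence $r_i\in AD(x,y)$ with $\alpha_{r_i}\prec\alpha_t$, contradicting the minimality of $t$.

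For (ii): since $W$ is finite, the map $(x,y)\mapsto(w_0y,w_0x)$ (with $w_0$ the longest element) reverses the Bruhat order and interchanges $A$ and $D$ (explicitly $D(w_0x)=A(x)$ and $A(w_0y)=D(y)$), so $AD(w_0y,w_0x)=AD(x,y)$ and $t$ is again a minimal reflection for the new pair; applying (i) to $(w_0y,w_0x)$ gives $(w_0x)t\lessdot w_0x$, i.e.\ $x\lessdot xt$. The inequalities $x\le yt$ and $xt\le y$ then follow from lifting at the covering $yt\lessdot y$ together with $xt>x$ (cf.\ \cite[\S 2.2]{BjBr}). For (iii): induct on $\ell(y)$ and fix a simple right descent $s\in S\cap D(y)$. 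If $s=t$, the identity is the defining recursion of Definition/Theorem \ref{Rpol}. If $s\neq t$, apply the $s$-recursion to each of $R_{x,y}$, $R_{x,yt}$ and $R_{xt,yt}$; to do so one must determine, for each term, whether the relevant left multiplication raises or lowers length and whether the pair stays comparable --- this is read off from (i)--(ii) and from Lemma \ref{23}, which controls how the conjugate reflection $tst$ sits with respect to $x$, $y$, $xt$, $yt$. One then checks that $t$, or the appropriate conjugate of it, remains a minimal reflection for the smaller pairs that appear, so that the inductive hypothesis applies, and collects terms.

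The main obstacle is step (i): deducing $yt\lessdot y$ --- equivalently, controlling $\ell(yt)$ --- from the minimality of $t$ in the mere \emph{subset} $AD(x,y)\subseteq D(y)$ is genuinely delicate, because $\alpha_t$ need not be minimal in all of $D(y)$ (for example, in type $A_3$ one can have $\alpha_t=\alpha_1+\alpha_2$ with $s_2\in D(y)$, yet still $yt\lessdot y$). This is precisely where the reflection lemmas \ref{23} and \eqref{Dimplies}, and hence the simply-laced hypothesis, are used. By contrast, the difficulty in step (iii) is merely the organisational one of enumerating the cases produced by the $s$-recursion and re-verifying minimality in each so that the induction closes.
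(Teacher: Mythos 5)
First, a structural remark: the paper does not prove Theorem \ref{GLT} at all --- it is quoted from \cite[Prop.~5.3]{TW} and \cite[Prop.~5.4]{CS} --- so your sketch is competing with those external proofs, not with anything in the text. Your step (i) is essentially sound: the fact you isolate (for $t\in D(y)$, $yt$ fails to be a coatom of $[e,y]$ exactly when $\alpha_t=\alpha_{r_1}+\alpha_{r_2}$ with $r_1,r_2\in D(y)$) is true for finite Weyl groups (it follows from the inversion-set computation of $\ell(y)-\ell(yt)$, since $y(\alpha_t)<0$ forces at least one summand of every such decomposition to lie in the inversion set of $y$), and your deduction of $yt\lessdot y$ from minimality of $t$, as well as the $w_0$-argument giving $x\lessdot xt$, are correct.

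The genuine gap is in the remaining two inequalities of step (ii). You claim that $x\le yt$ and $xt\le y$ ``follow from lifting at the covering $yt\lessdot y$ together with $xt>x$''. The lifting property you invoke is a statement about \emph{simple} reflections, and it fails for general reflections even when both coverings $x\lessdot xt$ and $yt\lessdot y$ are available: in $W=S_3$ take $x=s_1$, $y=s_1s_2$, $t=s_1s_2s_1$; then $x<y$, $x\lessdot xt=s_2s_1$ and $yt=s_2\lessdot y$, yet $x\not\le yt$ and $xt\not\le y$. (This $t$ is of course not minimal in $AD(x,y)=\{s_2,\,s_1s_2s_1\}$, so the theorem is not contradicted, but it shows that the interleaving inequalities are part of the real content of the generalised lifting property: they require the minimality of $t$ a second time and cannot be obtained from the two covers by any off-the-shelf lifting argument.) So these two bullets are unproved in your proposal. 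Relatedly, step (iii) is only a programme: after applying the $s$-recursion of Definition/Theorem \ref{Rpol} one must relate $R_{xs,ys}$, $R_{x,ys}$ to the recursion for the conjugate reflection $sts$, and the crucial verification that $t$ (or $sts$) is again a \emph{minimal} reflection for the smaller pairs --- which is what allows the induction to close --- is precisely the hard point and is not carried out.
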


As a corollary, Lemma \ref{drec} immediately generalises to minimal reflections. If $x<y$ and $t$ is a minimal reflection for $(x,y)$, we have:
\begin{equation}\label{drect}
d_{x,y} = \begin{cases}
d_{x,yt} +1 & \text{ if }xt\not\leq yt \\
d_{x,yt}  & \text{ if }xt\leq yt. \\
\end{cases}
\end{equation}

\begin{example}
Assume $W$ is the symmetric group $S_n$. We write $x=(x(1)x(2)\ldots x(n))$ and $y=(y(1)y(2)\ldots y(n))$ for the corresponding permutations. Then the minimal reflections for $(x,y)$ are the transpositions $t=(i,j)$ with $i<j$ such that $[i,j]$ is a minimal interval (with respect to the inclusion order) satisfying $y(i)>y(j)$ and $x(i)<x(j)$.
\end{example}

Let $x<y$ and let $t$ be a minimal reflection for $(x,y)$.
As a corollary of Theorem \ref{GLT}, the reflection $t$ is also a minimal element of the set $\at^T(x,y)\cap \coat^T(x,y)$.


\begin{lemma}\label{onenotsmaller}
Assume $x<y$ with $\ell(y)-\ell(x)\geq 2$ and let $t$ be a minimal reflection for $(x,y)$. Then there exists $r\in \at^T(x,y)\cup \coat^T(x,y)$ such that $r\not \preceq t$.
\end{lemma}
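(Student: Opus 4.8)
The plan is to argue by contradiction. Suppose every $r\in\at^T(x,y)\cup\coat^T(x,y)$ satisfies $r\preceq t$. Since, by the corollary of Theorem~\ref{GLT} recalled just above, $t$ itself lies in $\at^T(x,y)\cap\coat^T(x,y)$ and is $\prec$-minimal there, this supposition forces $\at^T(x,y)\cap\coat^T(x,y)=\{t\}$. The key observation is that it suffices to exhibit one reflection $s\in AD(x,y)\setminus\{t\}$ which also lies in $\at^T(x,y)\cup\coat^T(x,y)$: since $t$ is $\prec$-minimal in $AD(x,y)$, such an $s$ satisfies $s\neq t$ and $s\not\prec t$, hence $s\not\preceq t$, contradicting the supposition. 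Note $AD(x,y)\setminus\{t\}\neq\emptyset$ because $|AD(x,y)|\geq\ell(y)-\ell(x)\geq 2$. If $AD(x,y)$ admits a $\prec$-minimal element $t'\neq t$, then $t'$ is itself a minimal reflection for $(x,y)$, so by the corollary of Theorem~\ref{GLT} we get $t'\in\at^T(x,y)\cap\coat^T(x,y)\subseteq\at^T(x,y)\cup\coat^T(x,y)$ and $t'\in AD(x,y)\setminus\{t\}$---this is the desired $s$. Hence we may assume $t$ is the \emph{unique} $\prec$-minimal element of $AD(x,y)$; as $AD(x,y)$ is finite this forces $t\preceq s$ for every $s\in AD(x,y)$. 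We now induct on $n:=\ell(y)-\ell(x)$.

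For $n=2$, Lemma~\ref{squares} gives $[x,y]=\{x,xr_1,xr_2,y\}$ with $r_1\neq r_2$ in $A(x)$ and, after possibly swapping $r_1,r_2$, with $y=xr_1r_2$; both $xr_1$ and $xr_2$ are at once atoms and coatoms, so $\at^T(x,y)=\{r_1,r_2\}$ and $\coat^T(x,y)=\{r_2,r_2r_1r_2\}$, whence $\at^T(x,y)\cup\coat^T(x,y)=\{r_1,r_2,r_2r_1r_2\}$ is exactly the set of the three reflections of the rank-two subgroup $W'=\langle r_1,r_2\rangle$ (which has type $A_1\times A_1$ or $A_2$, by Lemma~\ref{squares} and its proof). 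In the $A_1\times A_1$ case the generator of $W'$ other than $t$ lies in $\at^T(x,y)\cap\coat^T(x,y)$ and is $\neq t$, hence $\not\preceq t$ by minimality of $t$. In the $A_2$ case a direct check (as in the proof of Lemma~\ref{squares}, together with \eqref{Dimplies}) shows that $t=r_2$ and that $\alpha_t$ is a simple root of $W'$; then the reflection $r''\in W'$ attached to the highest root $\theta'$ of $W'$ lies in $\at^T(x,y)\cup\coat^T(x,y)$, satisfies $r''\neq t$ (as $\theta'$ is not simple), and $\theta'-\alpha_t\in\Phi^+$, so $r''\succ t$ and in particular $r''\not\preceq t$. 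Either way the supposition is contradicted.

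For the inductive step, $n\geq 3$, we use Theorem~\ref{GLT}: $x\lessdot xt\leq y$ and $x\leq yt\lessdot y$, and both $[x,yt]$ and $[xt,y]$ have length $n-1\geq 2$. Since atoms of $[x,yt]$ are atoms of $[x,y]$ and coatoms of $[xt,y]$ are coatoms of $[x,y]$, we have $\at^T(x,yt)\subseteq\at^T(x,y)$ and $\coat^T(xt,y)\subseteq\coat^T(x,y)$. Applying the inductive hypothesis to $(x,yt)$ with a minimal reflection $t_1$, and to $(xt,y)$ with a minimal reflection $\hat t$, produces atom/coatom reflections of these sub-intervals that are not $\preceq t_1$, respectively not $\preceq\hat t$. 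The remaining task is to choose such a reflection so that it falls in $\at^T(x,y)\cup\coat^T(x,y)$ and, crucially, to compare it with $t$ rather than with $t_1$ or $\hat t$; this comparison is carried out with Lemma~\ref{23} and \eqref{Dimplies}, which govern how the ascent/descent sets of a reflection and the order $\succ$ transform when $y$ is replaced by $yt$ and $x$ by $xt$. I expect this last step---ruling out that passing to a sub-interval destroys the property of being $\not\preceq t$---to be the main obstacle, and it is exactly where the finiteness and simply-laced hypotheses are needed, through Lemma~\ref{23} and relation~\eqref{Dimplies}.
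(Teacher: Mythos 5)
Your proposal takes a genuinely different route from the paper, but it has a real gap: the inductive step is never carried out.

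The paper's proof is direct and non-inductive. It picks a minimal reflection $u$ for the smaller pair $(x,yt)$ (so automatically $u\in\at^T(x,y)$), disposes of the easy case $u\not\prec t$ immediately, and in the remaining case $u\prec t$ runs a short case analysis on the diamond $\{ytu,yt,ytut,y\}$ using Lemma~\ref{23} to produce $tut\in\coat^T(x,y)$ with $t\prec tut$. No induction, no contradiction, no case split on the number of $\prec$-minimal elements of $AD(x,y)$.

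Your proposal, by contrast, first reduces (correctly) to the case where $t$ is the unique $\prec$-minimal element of $AD(x,y)$, and then sets up an induction on $\ell(y)-\ell(x)$. Two issues arise. First, the reduction via $AD(x,y)$ has limited traction: the set $\at^T(x,y)\cup\coat^T(x,y)$ is \emph{not} contained in $AD(x,y)$ (an atom reflection need not lie in $D(y)$, a coatom reflection need not lie in $A(x)$), so knowing $t\preceq s$ for all $s\in AD(x,y)$ says nothing directly about the reflections you actually need to compare with $t$; indeed the paper's argument deliberately works with a reflection $u$ that may fail to be in $D(y)$. Second, and more seriously, the step you label as ``the main obstacle'' --- converting ``$r_1\not\preceq t_1$'' and ``$r_2\not\preceq\hat t$'' (from the inductive hypothesis applied to $(x,yt)$ and $(xt,y)$) into ``some $r\in\at^T(x,y)\cup\coat^T(x,y)$ has $r\not\preceq t$'' --- is exactly the content of the lemma and is left entirely unproved. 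There is no reason a coatom of $[x,yt]$ is a coatom of $[x,y]$, and no a priori relation between $\not\preceq t_1$ (or $\not\preceq\hat t$) and $\not\preceq t$. The paper avoids this entirely by choosing $u$ so that it already lies in $\at^T(x,y)$ and comparing it to $t$ directly. Finally, in your base case $n=2$ the assertion that $\alpha_t$ is a simple root of $W'$ is stated as ``a direct check'' but is not obvious; it does hold, but one should justify it, for instance by noting that if $\alpha_t$ were the highest root of $W'$ then both other reflections of $W'$ would be $\prec t$ --- which would contradict the minimality of $t$ in $AD(x,y)$ only after one checks that at least one of them actually lies in $AD(x,y)$. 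I would recommend following the paper's diamond argument rather than pursuing the inductive route.
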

\begin{proof}
Take $u$ a minimal reflection for $(x,yt)$. 
Clearly $u\neq t$. If $u\not\prec t$, then the claim follows because $u\in  \at^T(x,y)$.

So we can assume $u\prec t$. 
We have $y\gtrdot yt \gtrdot ytu$ and, by applying Lemma \ref{squares} to $[ytu,y]$, we have $yu\in [ytu,y]$ or $ytut\in [ytu,y]$, hence $u\in D(y)$ or $tut\in D(y)$. But if $u\in D(y)$, then also $u\in AD(x,y)$, contradicting the minimality of $t$. It remains to consider the case $tut\in D(y)$ with $t$ and $u$ not commuting, hence $tut=utu$.
\begin{center}
\begin{tikzpicture}[scale=0.7]
\node (a) at (0,2) {$y$};
\node (b) at (-2,0) {$yt$};
\node (c) at (2,0) {$ytut$};
\node (d) at (0,-2) {$ytu$};
\path[-] 
(a) edge node[left] {$\alpha_t$} (b)
(c) edge node[right] {$\alpha_{tut}$} (a)
(b) edge node[left] {$\alpha_{u}$} (d)
(c) edge node[right] {$\alpha_{t}$} (d);
\end{tikzpicture}
\end{center}

Since $t\in D(ytut)$ and $u=(tut)t(tut)\in A(y)$, we can apply Lemma \ref{23}.i) for $x=ytut$, $t_1=t$ and $t_2=tut$ to deduce $t\in D(tut)$. Hence, by \eqref{Dimplies}, $t\prec tut$ and we conclude since $tut\in \coat^T(x,y)$.
\end{proof}

\begin{remark}
	We can slightly strengthen Lemma \ref{onenotsmaller} for groups of type $A$, i.e. for the symmetric group $S_n$. As in the proof of Lemma \ref{onenotsmaller} let $x<y$, let $t$ be a minimal reflection for $(x,y)$ and $u$ be a minimal reflection for $(x,yt)$. If $u\prec t$, then the same proof as above shows that there exists $r\in \coat^T(x,y)$ with $r\not\preceq t$. 
	
	If $u\not\preceq t$, then, by Lemma \ref{squares}, either $u\in \coat^T(x,y)$, or $u$ and $t$ do not commute and $tut\in \coat^T(x,y)$. We claim that in the latter case we have $tut\not\prec t$. In fact, we have $t(\alpha_u)=\alpha_u\pm \alpha_t$ and, since $\alpha_u\not\preceq\alpha_t$, $t(\alpha_u)$ must be a positive root, namely $t(\alpha_u)=\alpha_{tut}$. If $\alpha_u+ \alpha_t=\alpha_{tut}$ then $tut\succ t$. Assume now $\alpha_u- \alpha_t=\alpha_{tut}$. Notice that if $u$ is the transposition $(i,j)\in S_n$ and $t=(i',j')\in S_n$ then $\alpha_u-\alpha_t$ is a positive root if and only if $i=i'$ and $j'<j$ or $i'>i$ and $j=j'$, so it is easy to see that $\alpha_{tut}=\alpha_u-\alpha_t\not\preceq \alpha_t$. 
	
	Hence, in type $A$ we can always find $r\in \coat^T(x,y)$ such that $r\not\preceq t$. By a symmetric argument, we can also always find $r'\in \at^T(x,y)$ such that $r'\not\preceq t$.
\end{remark}

\begin{lemma}\label{newmin}
Assume $x<y$ with $\ell(y)-\ell(x)\geq 2$ and let $t$ be a minimal reflection for $(x,y)$. Then there exists $z\in \at(x,y)$ such that $t$ is a minimal reflection for $(z,y)$ or $w\in \coat(x,y)$ such that $t$ is a minimal reflection for $(x,w)$.
\end{lemma}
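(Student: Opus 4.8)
The plan is to feed the two alternatives of Lemma~\ref{onenotsmaller} directly into the two alternatives of the conclusion, and these are interchanged by the order-reversing bijection $w\mapsto w_0w$ of $W$: this bijection swaps atoms with coatoms, swaps $A(\cdot)$ with $D(\cdot)$, and fixes both the set $AD(x,y)\subseteq T$ and the order $\prec$, hence preserves ``being a minimal reflection''. So it suffices to treat the case in which Lemma~\ref{onenotsmaller} gives $r\in\at^T(x,y)$ with $r\not\preceq t$. Among all $\rho\in\at^T(x,y)$ with $\rho\not\preceq t$ (a finite, nonempty set) I will choose $r$ to be $\prec$-\emph{maximal}, put $z:=xr\in\at(x,y)$ (so $z<y$ since $\ell(y)-\ell(x)\geq2$), and prove that $t$ is $\prec$-minimal in $AD(z,y)=A(z)\cap D(y)$.

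Step 1: $t\in AD(z,y)$. Since $t\in D(y)$ we only need $zt>z$. If $zt<z$, then $r,t$ are distinct right descents of $z$ ($r\neq t$ because $r\not\preceq t$). Passing to the minimal-length representative of the right coset $z\langle r,t\rangle$ and using Lemma~\ref{squares} together with the elementary structure of the rank-$\leq2$ group $\langle r,t\rangle$, one finds that in every case $xt=zrt$ has length $\ell(x)-1$, i.e.\ $t\in D(x)$, contradicting $t\in A(x)$. The only configuration escaping this computation is the one in which $t$ is the longest (highest) reflection of $\langle r,t\rangle$; but then $\alpha_t=\alpha_r+\alpha_{rtr}$ by \eqref{Dimplies}, so $r\prec t$, which is excluded. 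Hence $zt>z$ and $t\in AD(z,y)$.

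Step 2: minimality. Suppose $t$ is not $\prec$-minimal in $AD(z,y)$. Choosing a $\prec$-minimal element of $AD(z,y)$ that lies below $t$ yields a \emph{minimal reflection} $s$ for $(z,y)$ with $s\prec t$, $s\neq t$. If $s\in A(x)$ then $s\in AD(x,y)$ with $s\prec t$, contradicting that $t$ is minimal for $(x,y)$; so $s\in D(x)$. The crucial gain from taking $s$ minimal for $(z,y)$ is that Theorem~\ref{GLT} now applies to $(z,y)$ and gives $z\lessdot zs\leq y$. Translating descents into roots (recall $u\in D(w)$ iff $w(\alpha_u)$ is a negative root), we have that $x(\alpha_s)$ is negative, $x(\alpha_r)$ is positive, and $xr(\alpha_s)=x\bigl(r(\alpha_s)\bigr)$ is positive, where $r(\alpha_s)=\alpha_s-\alpha_s(\alpha_r^\vee)\alpha_r$ and $\alpha_s(\alpha_r^\vee)\in\{0,\pm1\}$ by the simply-laced hypothesis. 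The values $0$ and $+1$ are each impossible (they would make $xr(\alpha_s)$ negative, using that a sum of positive roots which is a root is positive, and the analogous statement for negative roots), so $\alpha_s(\alpha_r^\vee)=-1$ and $r(\alpha_s)=\alpha_r+\alpha_s=\alpha_{rsr}$. Then $x(\alpha_{rsr})=x(\alpha_r)+x(\alpha_s)=xr(\alpha_s)$ is positive, i.e.\ $rsr\in A(x)$; and since positive roots have $0/1$ coefficients in the simple basis, $\supp\alpha_{rsr}=\supp\alpha_r\sqcup\supp\alpha_s\not\subseteq\supp\alpha_t$ (as $r\not\preceq t$), so $rsr\not\preceq t$, while $\alpha_{rsr}\succ\alpha_r$ gives $rsr\succ r$. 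Finally, $\langle r,s\rangle\cong A_2$ with $r,s$ its simple reflections, and the descent pattern forces $x=x_0s$ for $x_0$ the minimal-length representative of $x\langle r,s\rangle$; a one-line computation in this $A_2$-coset then gives $x\cdot rsr=x_0rs\lessdot x_0srs=zs\leq y$ with $\ell(x\cdot rsr)=\ell(x)+1$. Hence $x\cdot rsr\in\at(x,y)$, i.e.\ $rsr\in\at^T(x,y)$, with $rsr\not\preceq t$ and $rsr\succ r$ --- contradicting the $\prec$-maximality of $r$.

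This contradiction shows $t$ is a minimal reflection for $(z,y)$, which is the atom alternative of the statement; the coatom alternative follows by the symmetry recorded at the start. I expect the delicate point to be Step~2: one has to realise that $s$ must be chosen to be a minimal reflection for $(z,y)$ so that Theorem~\ref{GLT} delivers $zs\leq y$ for free (this replaces a ``lifting'' inequality that is genuinely false for non-simple reflections), and that it is the maximality built into the choice of $r$ that turns the ensuing root inequalities into the desired contradiction. Verifying the case analysis in Step~1 and the $A_2$-coset bookkeeping in Step~2 is routine but needs care.
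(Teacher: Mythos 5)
Your overall strategy is the same as the paper's: take the witness supplied by Lemma \ref{onenotsmaller}, choose it $\prec$-extremal, show that $t$ still lies in the relevant $AD$-set for the shifted pair, and, if $t$ failed to be $\prec$-minimal there, use the new minimal reflection (via Theorem \ref{GLT}) to manufacture a $\prec$-larger witness in $\at^T(x,y)$, contradicting extremality. The paper argues on the coatom side using Lemma \ref{23} and \eqref{Dimplies} and declares the atom side symmetric; you argue on the atom side, reduce the other case by $w_0$-duality (which is legitimate: $AD(x,y)$ and $\prec$ are preserved, atoms and coatoms are exchanged), and replace the Lemma \ref{23} steps by a rank-two coset analysis and explicit root computations. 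That trade is acceptable, and your use of Theorem \ref{GLT} for $(z,y)$ to obtain $zs\leq y$ plays the role that the minimality of the new reflection plays in the paper's argument for $(x,yr)$.

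Three points need repair. First, and most importantly, the assertion that positive roots have $0/1$ coefficients in the simple basis (and hence that $\supp\alpha_{rsr}=\supp\alpha_r\sqcup\supp\alpha_s$) is false outside type $A$: in types $D$ and $E$ coefficients larger than $1$ occur and the supports of $\alpha_r$ and $\alpha_s$ may overlap; the paper itself invokes this fact only in its type-$A$ remark. Fortunately the conclusion you want, $rsr\not\preceq t$, is immediate without it: if $\alpha_t-\alpha_r-\alpha_s$ were a nonnegative combination of simple roots, then so would be $\alpha_t-\alpha_r$, i.e.\ $r\preceq t$, which is excluded; replace the support argument by this transitivity remark. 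Second, in Step 1 the claimed dichotomy (``either $t\in D(x)$, or $t$ is the highest reflection of $\langle r,t\rangle$, whence $r\prec t$'') misses the configuration in which $r$ is the highest reflection of $\langle r,t\rangle$ and $z$ maps to the longest element of its coset: there one has $xt>x$ and $r\not\preceq t$, and this case is excluded only by the covering hypothesis $x\lessdot xr$ (it would force $\ell(xr)\geq\ell(x)+3$), so the case analysis must invoke the atom condition and not only descents. Third, $\ell(x\cdot rsr)=\ell(x)+1$ does not follow from the coset computation alone, since covers in the induced order on $x_0\langle r,s\rangle$ need not be covers in $W$; it follows from the sandwich $x<x\cdot rsr<zs$ together with $\ell(zs)=\ell(x)+2$, both of which you have from $rsr\in A(x)$ and $z\lessdot zs\leq y$. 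With these local fixes the argument is complete and proves the lemma.
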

\begin{proof}
From Lemma \ref{onenotsmaller}, the set 
\[U := \{ r \in \at^T(x,y)\cup \coat^T(x,y)\mid r\not\preceq t\}\]
is not empty. Let $r$ be a maximal element in $U$ with respect to $\succ$. We can assume $r\in \coat^T(x,y)$ as the case $r\in \at^T(x,y)$ is completely symmetric.

Since $r\not\prec t$, by \eqref{Dimplies},  we have $r\in A(t)$. Then, by Lemma \ref{23}.i) for $x=y$, $t_1=r$ and $t_2=t$, since $r\in D(y)$ we have $trt\in D(yt)$. In other words, we have $\ell(yrt)<\ell(yt)=\ell(yr)$, 
 thus also $t\in D(yr)$. Since $t\in A(x)$, we obtain $t\in AD(x,yr)$.  
 
Assume that $t$ is not minimal in $AD(x,yr)$, thus there exists a minimal reflection $u$ for $(x,yr)$ with $u\prec t$. Applying Lemma \ref{squares} to the interval $[yru,y]$ we get $yu\in [yru,y]$ or $yrur\in [yru,y]$. But we cannot have $u\in D(y)$ since $t$ is minimal in $AD(x,y)$, thus we deduce that $yrur\in [yru,y]$ and that $r$ and $u$ do not commute.

\begin{figure}[h]
	\begin{center}
		\begin{tikzpicture}[scale=0.7]
		\node (a) at (0,2) {$y$};
		\node (b) at (-3,0) {$yt$};
		\node (c) at (0,0) {$yr$};
		\node (d) at (-1.5,-2) {$yrt$};
		\node (e) at (3,0) {$yrur$};
		\node (f) at (1.5,-2) {$yru$};
		\path[-] 
		(a) edge node[left] {$\alpha_t$} (b)
		(c) edge node[right] {$\alpha_{r}$} (a)
		(b) edge node[left] {$\alpha_{u}$} (d)
		(c) edge node[right] {$\alpha_{t}$} (d)
		(c) edge node[right] {$\alpha_u$} (f)
		(a) edge node[right] {$\alpha_{rur}$} (e)
		(e) edge node[right] {$\alpha_r$} (f);
		\end{tikzpicture}
	\end{center}
\end{figure}

Now we have $r\in D(yrur)$ and $u=(rur)r(rur)\in A(y)$. Lemma \ref{23}.i) for $x=yrur$, $t_1=r$ and $t_2=rur$ implies that $r\in D(rur)$ and in particular, by \eqref{Dimplies}, we have $r\prec rur$, contradicting the maximality of $r$ in $U$.
\end{proof}

\begin{prop}\label{tchain}
Assume $x<y$ and let $t$ be a minimal reflection for $(x,y)$. Then there exists a maximal chain $x=z_0\lessdot z_1\lessdot \ldots \lessdot z_{\ell(y)-\ell(x)-1} = yt$ such that $z_it\in [x,y]$ and $z_it \gtrdot z_i$ for all $i$.
\end{prop}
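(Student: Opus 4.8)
The plan is to prove the statement by induction on $\ell(y)-\ell(x)$, using Lemma \ref{newmin} to reduce to one of two sub-cases. The base case $\ell(y)-\ell(x)=1$ is immediate: then $x=yt$, the chain is trivial, and $z_0 t = xt = y \in [x,y]$ with $y \gtrdot x$. For the inductive step, assume $\ell(y)-\ell(x)\geq 2$ and that $t$ is a minimal reflection for $(x,y)$. By Lemma \ref{newmin}, either there is an atom $z\in\at(x,y)$ such that $t$ is a minimal reflection for $(z,y)$, or there is a coatom $w\in\coat(x,y)$ such that $t$ is a minimal reflection for $(x,w)$.

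In the first case, $x\lessdot z$ and $\ell(y)-\ell(z)=\ell(y)-\ell(x)-1$, so by the inductive hypothesis applied to $(z,y)$ there is a maximal chain $z=z_1'\lessdot z_2'\lessdot\ldots\lessdot z_{\ell(y)-\ell(x)-1}'=yt$ with $z_i' t\in[z,y]\subseteq[x,y]$ and $z_i' t\gtrdot z_i'$ for all $i$. Setting $z_0:=x$ and $z_i:=z_i'$ for $i\geq 1$, it remains only to check $z_0 t=xt\in[x,y]$ and $xt\gtrdot x$; this follows from Theorem \ref{GLT}, which gives $x\lessdot xt\leq y$ since $t$ is a minimal reflection for $(x,y)$. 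This settles the first case. In the second case, $t$ is a minimal reflection for $(x,w)$ with $w\lessdot y$, and $\ell(w)-\ell(x)=\ell(y)-\ell(x)-1$, so by induction there is a maximal chain $x=z_0\lessdot z_1\lessdot\ldots\lessdot z_{\ell(y)-\ell(x)-2}=wt$ with $z_i t\in[x,w]\subseteq[x,y]$ and $z_i t\gtrdot z_i$ for all $i$. We need to extend this to a chain ending at $yt$: since $w\lessdot y$ and $wt=z_{\ell(y)-\ell(x)-2}$, it suffices to show $wt\lessdot yt$, i.e.\ that $t$ maps the covering $w\lessdot y$ to a covering $wt\lessdot yt$, together with $yt\in[x,y]$ (which is Theorem \ref{GLT}, $x\leq yt\lessdot y$) and $(wt)t=w\gtrdot wt$ (automatic from $wt\lessdot w$, which holds because $t$ is a minimal reflection for $(x,w)$, so $t\in D(w)$). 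Then appending $z_{\ell(y)-\ell(x)-1}:=yt$ gives the desired chain.

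The main obstacle is the claim $wt\lessdot yt$ in the second case — i.e.\ controlling what the (non-simple) reflection $t$ does to a covering relation $w\lessdot y$ with $t\in D(y)\cap D(w)$. For a \emph{simple} reflection this is the classical lifting property, but here $t$ is only a minimal reflection. I expect to handle it using Lemma \ref{squares} and Lemma \ref{23}: consider the interval $[wt, y]$, which has length $2$ and contains $w$ and $yt$; by Lemma \ref{squares} it is a square $\{wt, w, yt, y'\}$ for some fourth vertex, and one shows the fourth vertex must be $yt$'s partner so that $wt\lessdot yt$, ruling out the alternative via the minimality of $t$ (if the square were not of this shape, one would produce a reflection $\prec t$ lying in $AD(x,y)$, contradicting minimality, in the same spirit as the arguments in Lemmas \ref{onenotsmaller} and \ref{newmin}). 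Once the covering $wt \lessdot yt$ is established, concatenation of chains and the bookkeeping of the $z_i t \gtrdot z_i$ conditions is routine.
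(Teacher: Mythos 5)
Your induction scheme is exactly the paper's: base case, Lemma \ref{newmin} to split into the atom and coatom cases, the inductive hypothesis on the shorter interval, and Theorem \ref{GLT} to handle the extra vertex. The atom case is complete and correct as you wrote it. The issue is the coatom case: the covering $wt\lessdot yt$ is precisely the step you leave unproven, and the sketch you offer does not work as stated. Placing $yt$ inside the length-two interval $[wt,y]$ already presupposes $wt\leq yt$, which is the very thing to be shown, so the proposed use of Lemma \ref{squares} on $[wt,y]$ is circular; moreover there is no reason to expect minimality of $t$ to enter here, and trying to force a contradiction with it is a detour. So as written there is a genuine (if small) gap at the single point you yourself flagged as ``the main obstacle.''

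The fix is much simpler than what you envisage, and it is what the paper does in one line. Writing $w=yr$ with $r=w^{-1}y\in T$, one has $wt=yrt=yt\,(trt)$, so $wt$ and $yt$ differ by right multiplication by the reflection $trt$. Their lengths differ by one: $\ell(wt)=\ell(w)-1=\ell(y)-2$ (by Theorem \ref{GLT} applied to $(x,w)$, since $t$ is minimal for $(x,w)$), while $\ell(yt)=\ell(y)-1$. By the standard characterization of the Bruhat order via the Bruhat graph --- for $u\in W$ and $t'\in T$ one has $u<ut'$ if and only if $\ell(u)<\ell(ut')$ --- this immediately gives $wt<yt$, hence $wt\lessdot yt$. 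No appeal to Lemma \ref{squares}, Lemma \ref{23}, or the minimality of $t$ is needed for this step; once it is in place, the rest of your bookkeeping ($z_it\gtrdot z_i$, $z_it\in[x,y]$, and appending $yt$ with $yt\cdot t=y\gtrdot yt$) is correct and coincides with the paper's argument.
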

\begin{proof}
We prove the claim by induction on $\ell(y)-\ell(x)$. The case $\ell(y)-\ell(x)=1$ is trivial, so we can assume $\ell(y)-\ell(x)\geq 2$.

From Lemma \ref{newmin} we can assume that there exists $r\in \coat^T(x,y)$ such that $t$ is a  minimal reflection for $(x,yr)$ (the case $r\in \at^T(x,y)$ is completely symmetric).

By induction there exists a chain $x=z_0\lessdot z_1\lessdot \ldots \lessdot z_{\ell(y)-\ell(x)-2} = yrt$ with $z_it\in [x,yr]$ and $z_it\gtrdot z_i$ for all $i\leq \ell(y)-\ell(x)-2$. 
Since $yrt=yt(trt)$ we have $yrt\lessdot yt$, so we conclude by setting $z_{\ell(y)-\ell(x)-1} = yt$.
\end{proof}

\section{The coefficient  of \texorpdfstring{$q$}{q} in finite simply-laced type}\label{secProof}

We are ready to finally put together the results of Section \ref{secqMG} and \ref{secGLP} to obtain the main theorem of this paper. 
 
\begin{theorem}
Let $W$ be a Weyl group of type ADE. Then for any $x,y\in W$ with $x<y$ we have $d_{x,y}=g_{x,y}$. 
\end{theorem}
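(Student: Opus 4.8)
The plan is to prove $d_{x,y}=g_{x,y}$ by induction on $\ell(y)-\ell(x)$, using a minimal reflection $t$ for $(x,y)$ as the engine of the induction. Since we already know $d_{x,y}\leq g_{x,y}$ in full generality, the real content is the reverse inequality $g_{x,y}\leq d_{x,y}$, or more precisely an exact matching of the two recursions. The base case $\ell(y)-\ell(x)\leq 1$ is immediate: both sides are $0$ when $\ell(y)-\ell(x)=1$ (a single edge, no diamonds), matching $d_{x,y}=0$ there, and similarly when $\ell(y)-\ell(x)=2$ one checks directly on the square of Lemma \ref{squares} that $g_{x,y}=1=d_{x,y}$.

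For the inductive step, fix a minimal reflection $t$ for $(x,y)$. By Theorem \ref{GLT} we have $x\lessdot xt\leq y$ and $x\leq yt\lessdot y$, and by \eqref{drect} the coefficient satisfies $d_{x,y}=d_{x,yt}+1$ if $xt\not\leq yt$ and $d_{x,y}=d_{x,yt}$ if $xt\leq yt$. So the strategy is to build, for the interval $[x,y]$, a diamond generating set whose cardinality tracks exactly this recursion. The key structural input is Proposition \ref{tchain}: there is a maximal chain $x=z_0\lessdot z_1\lessdot\cdots\lessdot z_{\ell(y)-\ell(x)-1}=yt$ with $z_it\in[x,y]$ and $z_it\gtrdot z_i$ for all $i$. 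This chain lives inside $[x,yt]$, so by induction $[x,yt]$ has a diamond generating set $F'$ with $|F'|=d_{x,yt}=g_{x,yt}$. I would then set $F:=F'\cup\{(z_i\rla z_it)\}$ for a carefully chosen set of indices $i$, or rather $F:=F'\cup\{(x\rla xt)\}$ (note $x=z_0$ and $xt=z_0t$, so this ``$t$-edge'' is $(z_0\rla z_0t)$), and argue two things: first, that $F$ is diamond generating in $E_{x,y}$; second, that whether we need to add the extra edge $(x\rla xt)$ to $F'$ corresponds precisely to whether $xt\not\leq yt$.

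For diamond generation: starting from $F'$, its diamond closure in $\calG|_{[x,yt]}$ is all of $E_{x,yt}$; then using that each $z_it\gtrdot z_i$ and applying Lemma \ref{squares} to the length-$2$ intervals $[z_i, z_{i+1}t]$ (equivalently the diamonds with vertices $z_i, z_{i+1}, z_it, z_{i+1}t$), one propagates the single edge $(z_0\rla z_0t)$ up the chain to get all the edges $(z_i\rla z_it)$ into $F^\diamond$, and then — by the coatom argument of Remark \ref{lowerbounds}, or by Lemma \ref{shortedges} — one fills in the rest of $E_{x,y}$. The delicate point, and the main obstacle, is the \emph{exactness} of the count: I must show that when $xt\leq yt$ the edge $(x\rla xt)$ is \emph{already} in $(F')^\diamond$ inside the larger graph $\calG|_{[x,y]}$ (so $g_{x,y}\leq g_{x,yt}=d_{x,yt}=d_{x,y}$), whereas when $xt\not\leq yt$ no diamond generating set of size $d_{x,yt}$ can exist for $[x,y]$ — i.e. the lower bound $g_{x,y}\geq d_{x,y}=d_{x,yt}+1$. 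For the latter I would invoke the injectivity argument from the proof of $d_{x,y}\leq g_{x,y}$ in reverse flavor: combine a minimal diamond generating set with the recursive structure of $V_{x,y}$ versus $V_{x,yt}$, showing that the edge $(x\rla xt)$ — whose function $\lambda_{x,xt}$ is a genuinely new linear coordinate on $V_{x,y}$ precisely when $xt\not\leq yt$ — cannot be expressed through the others. Making this last dichotomy airtight, reconciling the combinatorial closure operation with the linear-algebra dimension count through the minimal reflection, is where the work concentrates.
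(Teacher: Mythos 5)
Your route is the paper's: induct on $\ell(y)-\ell(x)$ through a minimal reflection $t$, run a ladder of diamonds (Lemma \ref{squares}) along the chain of Proposition \ref{tchain} to push the edge $(x\rla xt)$ up to $(yt\rla y)$, conclude that a diamond generating set of $E_{x,yt}$, augmented by $(x\rla xt)$ if necessary, is diamond generating for $E_{x,y}$, and compare with the recursion \eqref{drect}. However, the step you single out as the ``delicate point'' and leave open is in fact immediate from results already in hand, and the extra machinery you sketch for it is not needed. In Case 1 ($xt\leq yt$) the edge $(x\rla xt)$ is itself an edge of $E_{x,yt}$, so it lies in $(F')^\diamond$ for any diamond generating $F'\subseteq E_{x,yt}$; there is nothing to verify in the larger graph. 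In Case 2 ($xt\not\leq yt$) you do not need to show by a ``reverse'' injectivity argument that $\lambda_{x,xt}$ is a genuinely new coordinate on $V_{x,y}$, nor that no diamond generating set of size $d_{x,yt}$ exists: the required lower bound $g_{x,y}\geq d_{x,yt}+1$ is exactly the general inequality $d_{x,y}\leq g_{x,y}$, already proved, combined with $d_{x,y}=d_{x,yt}+1$ from \eqref{drect}, which rests on Theorem \ref{GLT} and on no further linear algebra. So the two cases close with $g_{x,y}\leq g_{x,yt}$, respectively $g_{x,y}\leq g_{x,yt}+1$, coming from your ladder construction, together with $g_{x,y}\geq d_{x,y}$, \eqref{drect} and the induction hypothesis; there is no residual dichotomy to make airtight. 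Two small corrections: the base-case values are $d_{x,y}=g_{x,y}=1$ for a covering pair and $d_{x,y}=g_{x,y}=2$ for a length-two interval (not $0$ and $1$ as you state), though the equality you need still holds; and the final filling-in of $E_{x,y}$ once $F^\diamond$ contains a maximal chain from $x$ to $y$ relies on the maximal-chain (shellability) statement of Remark \ref{lowerbounds}, not on its coatom statement.
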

\begin{proof}
If $\ell(y)-\ell(x)=1$ the claim is clear since $d_{x,y}=g_{x,y}=1$. 
Let $x,y\in W$ with  $\ell(y)-\ell(x)>1$. By induction, assume the claim for any $x',y'\in W$ with $\ell(y')-\ell(x')<\ell(y)-\ell(x)$.

Let $t$ be a minimal reflection for $(x,y)$, hence $x\lessdot xt< y$ and $x<yt\lessdot y$. We divide the proof into two cases as in Equation \eqref{drect}. 

\textbf{Case 1:} Assume $xt\leq yt$.  We have $g_{x,y}\geq d_{x,y}=d_{x,yt}=g_{x,yt}$. Then it is enough to show $g_{x,yt}\geq g_{x,y}$, or that any diamond generating subset $F\cug E_{x,yt}$ is also diamond generating as a subset of $E_{x,y}$. Since $xt\leq yt$ the edge $(x\rla xt)$ belongs to $F^\diamond$. Let $x=z_0\lessdot z_1\lessdot \ldots \lessdot z_{\ell(y)-\ell(x)-1}=yt$ be a maximal chain  with $z_it\gtrdot z_i$ and $z_it\in [x,y]$ as in Lemma \ref{tchain}. Then since $F^\diamond$ contains $(x\rla xt)$ and $(z_i\rla z_{i+1})$ for any $0\leq i\leq \ell(y)-\ell(x)-2$,  it must also contain the edges $(z_i-z_it)$ and $(z_it\rla z_{i+1}t)$  for any $i$ (see Figure \ref{ladder}). In particular,  $(yt\rla y)\in F^\diamond$, so $F^\diamond$ contains a maximal chain from $x$ to $y$ and, as in Remark \ref{lowerbounds}, this implies that $F$ is a diamond generating subset of $E_{x,y}$.
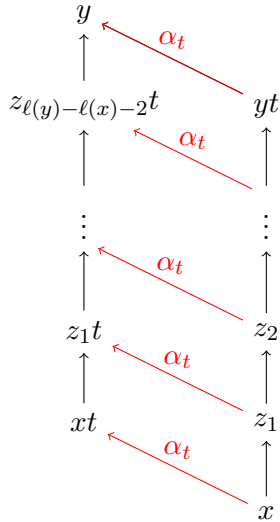
\begin{figure}[h]
\begin{center}
\begin{tikzpicture}[scale = 0.6]
\node (a) at (-2,7) {$y$};
\node (b) at (-2,0) {$z_1t$};
\node (c) at (2,0) {$z_2$};
\node (b0) at (-2,-2) {$xt$};
\node (c0) at (2,-2) {$z_1$};
\node (b2) at (-2, 2) {$\;$};
\node (c2) at (2, 2) {$\; $};
\node  at (-2, 2.5) {$\vdots$};
\node  at (2, 2.5) {$\vdots$};
\node (b22) at (-2, 3) {$\;$};
\node (c22) at (2, 3) {$\;$};
\node (b3) at (-2, 5) {$z_{\ell(y)-\ell(x)-2}t$};
\node (c3) at (2, 5) {$yt$};
\node (f) at (2,-4) {$x$};
\path[->] 
(f) edge (c0)
(c0) edge (c)
(c) edge (c2)
(c22) edge (c3) 
(c3) edge (a)
(b0) edge (b)
(b) edge (b2)
(b22) edge (b3)
(b3) edge (a);
\path[->, color = red]
(c0) edge node[above] {$\alpha_t$} (b)
(c) edge  node[above] {$\alpha_t$} (b2)
(c22) edge node[above] {$\alpha_t$} (b3)
(c3) edge node[above] {$\alpha_t$} (a)
(f) edge node[above] {$\alpha_t$} (b0);
\end{tikzpicture}
\end{center}
\caption{A ``ladder'' between $x$ and $y$}
\label{ladder}
\end{figure}

\textbf{Case 2:} Assume $xt\not \leq yt$. We have $g_{x,y}\geq d_{x,y}=d_{x,yt}+1=g_{x,yt}+1$. It is enough to show that $g_{x,yt}+1\geq g_{x,y}$. Let $F$ be any diamond generating subset of $E_{x,yt}$. Let $F'=F \cup \{x-xt\}$. Then the same argument of the previous case shows that $F'$ is a diamond generating subset of $E_{x,y}$. The claim follows.   
\end{proof}

As a byproduct of the proof we also obtain that there always exists a diamond generating subset $F\cug E_{x,y}$ such that for any $(z\rla w)\in F$ we have $|\ell(z)-\ell(w)|=1$. Hence to compute $g_{x,y}$ in type $ADE$ it is enough to look at the Hasse diagram of $[x,y]$.

\begin{cor}
The coefficient of $q$ of Kazhdan-Lusztig polynomials in type ADE is a combinatorial invariant, i.e. $q_{x,y}$ depends only on the poset structure of the Bruhat interval $[x,y]$. 

Moreover, the coefficient $q_{x,y}$ can be explicitly computed using the formula 
\[q_{x,y} = c_{x,y} - g_{x,y}.\]
\end{cor}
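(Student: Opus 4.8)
The plan is to read the corollary off directly from the main Theorem together with the bookkeeping of Section \ref{secqMG}, so that almost nothing new is needed. First I would recall Equation \eqref{q=c-d}, which holds in every Coxeter group and reads $q_{x,y} = c_{x,y} - d_{x,y}$. Feeding in the identity $d_{x,y} = g_{x,y}$ supplied by the main Theorem (valid in type $ADE$) immediately yields the closed formula $q_{x,y} = c_{x,y} - g_{x,y}$, which is the ``moreover'' assertion.

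It then remains to see that the right-hand side depends only on the poset $[x,y]$. For $c_{x,y} = |\coat(x,y)|$ this is clear, since it counts the elements covered by $y$ inside the interval. For $g_{x,y}$ I would unwind the definition: it is the minimal cardinality of a diamond generating subset of the edge set $E_{x,y}$ of $\calG|_{[x,y]}$, and both the notion of a diamond (a $4$-cycle on four distinct vertices) and the diamond-closure operation $F \mapsto F^\diamond$ are formulated purely in terms of the underlying unlabelled --- indeed undirected --- graph of $\calG|_{[x,y]}$; the root labels $\alpha_t$ never enter. By Dyer's theorem from \cite{Dye}, recalled in the Remark after Theorem \ref{KLconj}, this unlabelled directed graph is itself determined by the poset $[x,y]$. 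Hence $g_{x,y}$, being extracted from that graph alone, is a poset invariant, and therefore so is $q_{x,y}$.

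I do not expect a genuine obstacle here; this is a formal consequence of what precedes. The one point worth a sentence of care is that $g_{x,y}$ is defined via the full edge set $E_{x,y}$, which contains the ``long'' edges $(z \rla zt)$ with $|\ell(z) - \ell(zt)| > 1$; since these are still edges of the poset-determined graph $\calG|_{[x,y]}$, nothing goes wrong. Alternatively, one could first invoke Lemma \ref{shortedges} --- or, in type $ADE$, the byproduct recorded immediately after the main Theorem --- to reformulate $g_{x,y}$ using only Hasse-diagram edges, after which poset-invariance is completely transparent and one needs only the elementary fact that the Hasse diagram is determined by the poset.
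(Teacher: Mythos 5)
Your proposal is correct and follows exactly the route the paper intends: the formula is \eqref{q=c-d} combined with the main theorem's identity $d_{x,y}=g_{x,y}$, and poset-invariance of $g_{x,y}$ rests on Dyer's result (recalled in the remark after Theorem \ref{KLconj}) that the poset determines $\calG|_{[x,y]}$ as an unlabelled graph, the diamond-closure notion being label-free. Your extra care about long edges (and the alternative via Lemma \ref{shortedges}) is a nice touch but not a departure from the paper's argument.
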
 

\section*{Acknowledgements}

This paper was born as a consequence of many insightful discussions with Geordie Williamson, whom I would like to warmly thank.
I would also like to thank Ben McDonnell and Lars Thorge Jensen for their comments on a preliminary version of this paper.

\bibliographystyle{alpha}

\Address
\end{document}